\documentclass[pdflatex,sn-mathphys-num]{sn-jnl}

\usepackage{amsmath,amssymb,amsthm,mathtools,mathrsfs}
\usepackage{graphicx}%
\usepackage{multirow}%
\usepackage{amsmath,amssymb,amsfonts}%
\usepackage{amsthm}%
\usepackage{mathrsfs}%
\usepackage[title]{appendix}%
\usepackage{xcolor}%
\usepackage{textcomp}%
\usepackage{manyfoot}%
\usepackage{booktabs}%
\usepackage{algorithm}%
\usepackage{algorithmicx}%
\usepackage{algpseudocode}%
\usepackage{listings}%
\usepackage{xcolor}
\usepackage{graphicx}
\usepackage{booktabs}
\usepackage{hyperref}
\hypersetup{
  colorlinks=true,
  linkcolor=blue!55!black,
  citecolor=blue!55!black,
  urlcolor=blue!55!black
}

\theoremstyle{plain}
\newtheorem{theorem}{Theorem}[section]
\newtheorem{proposition}[theorem]{Proposition}
\newtheorem{lemma}[theorem]{Lemma}
\newtheorem{corollary}[theorem]{Corollary}
\theoremstyle{definition}
\newtheorem{definition}[theorem]{Definition}
\newtheorem{assumption}[theorem]{Assumption}
\newtheorem{remark}[theorem]{Remark}

\newcommand{\R}{\mathbb{R}}
\newcommand{\E}{\mathbb{E}}
\newcommand{\PP}{\mathbb{P}}
\newcommand{\F}{\mathbb{F}}
\newcommand{\Rp}{\R_+^2}
\newcommand{\Rpp}{\R_{++}^2}
\newcommand{\Lc}{\mathcal{L}}
\newcommand{\Cc}{\mathcal{C}}
\newcommand{\Dc}{\mathcal{D}}
\newcommand{\Tc}{\mathcal{T}}
\newcommand{\Fc}{\mathcal{F}}
\newcommand{\sgD}{\sigma_\Delta}
\newcommand{\Gtot}{\Gamma^{\mathrm{tot}}}
\newcommand{\Gac}{\Gamma^{\mathrm{ac}}}
\newcommand{\GD}{\Gamma^{\Delta}}
\newcommand{\Gbd}{\Gamma^{\partial}}
\newcommand{\Lac}{\Lc^{\mathrm{ac}}}
\newcommand{\bb}{\mathfrak{b}}
\newcommand{\Tr}{\operatorname{Tr}}
\newcommand{\tr}{\operatorname{tr}}
\newcommand{\bone}{\mathbf 1}

\begin{document}
\title{A Measure-Valued Obstacle Problem for an Obliquely Reflected Diffusion with a Max-Type Payoff}

\author*[1]{\fnm{Louis Shuo} \sur{Wang}}
\email{wang.s41@northeastern.edu}
\equalcont{These authors contributed equally to this work.}

\author[2]{\fnm{Jiguang} \sur{Yu}}
\email{jyu678@bu.edu}
\equalcont{These authors contributed equally to this work.}

\author*[3]{\fnm{Ye} \sur{Liang}}\email{ye-liang@uiowa.edu}
\equalcont{These authors contributed equally to this work.}

\affil[1]{\orgdiv{Department of Mathematics},
\orgname{Northeastern University},
\orgaddress{
\city{Boston},
\state{MA},
\postcode{02115},
\country{USA}
}}

\affil[2]{\orgdiv{College of Engineering},
\orgname{Boston University},
\orgaddress{
\city{Boston},
\state{MA},
\postcode{02215},
\country{USA}
}}

\affil[3]{\orgdiv{College of Engineering},
\orgname{The University of Iowa},
\orgaddress{
\city{Iowa City}, 
\postcode{52242}, 
\state{IA},
\country{USA}}}
\abstract{
We study an obliquely reflected optimal stopping problem in the nonnegative
quadrant with nonsmooth max-type payoff \(G(x)=x_1\vee\alpha x_2\), and we
develop a measure-valued potential-theoretic formulation of the associated
obstacle problem.  The kink of \(G\) on the diagonal \(x_1=\alpha x_2\)
produces a singular surface measure in the distributional generator, while the
oblique reflection directions generate boundary local-time contributions on
the coordinate faces.  Together with the absolutely continuous stopping gain,
these terms define a total signed stopping measure \(\Gtot\).  We derive the
corresponding reflected It\^{o}--Tanaka identity, prove a killed-resolvent
representation of the value function in the continuation region, and show that
the unrestricted reflected resolvent is generally incorrect because the
process is not absorbed on the stopping set.  The free boundary is formulated
through a continuation-side trace condition for the killed potential.  Under a
vertical monotonicity hypothesis on \(V-G\), the stopping set is shown to have
an epigraph form.  We finally prove a verification theorem: any admissible
epigraph candidate satisfying contact, strict continuation, reflected Neumann
compatibility, growth, the trace condition, and measure-superharmonicity
coincides with the value function, and its first entry time is optimal.
}

\keywords{optimal stopping; obstacle problem; killed resolvent; free boundary;
It\^{o}--Tanaka formula; boundary local time; Revuz measure}

\pacs[MSC Classification]{60G40; 60J60; 35R35; 35J86; 49L20; 60H30}

\maketitle

\section{Introduction}
\label{sec:introduction}

Let $X=(X^1,X^2)$ be an obliquely reflected diffusion in the nonnegative quadrant
$\Rp=[0,\infty)^2$,
\begin{equation}
\label{eq:intro-sde}
        dX_t=\beta(X_t)\,dt+\sigma(X_t)\,dW_t
        +\rho_1\,dL_t^1+\rho_2\,dL_t^2,
        \qquad X_0=x\in\Rp .
\end{equation}
where $L^1$, $L^2$ are the boundary reflection processes, the reflection on the
face $F_i:=\{x_i=0\}$ acting along a fixed inward vector $\rho_i$, and
$a:=\sigma\sigma^\top$. For $r>0$, a running cost $c\ge0$, and the max-type
reward
\[
        G(x)=x_1\vee\alpha x_2,\qquad \alpha>0,
\]
we study
\begin{equation}
\label{eq:intro-value}
        V(x)=\sup_{\tau}\E_x\!\left[
        e^{-r\tau}G(X_\tau)-\int_0^\tau e^{-rs}c(X_s)\,ds\right].
\end{equation}
Formally, $V$ solves the reflected obstacle problem
\begin{equation}
\label{eq:intro-obstacle}
        \max\{\Lc V-rV-c,\;G-V\}=0\quad\text{in }\Rp,
        \qquad \rho_i\!\cdot\!\nabla V=0\quad\text{on }F_i.
\end{equation}
The purpose of this paper is to give a measure-valued potential-theoretic
formulation of \eqref{eq:intro-obstacle} and a verification theorem for
candidate free boundaries. We deliberately do not claim to solve the free
boundary; we prove a rigorous verification framework.

This work belongs to the intersection of optimal stopping, reflected
diffusions, and measure-valued potential theory.  The connection between
optimal stopping and obstacle-type variational inequalities is classical
\cite{el2006aspects,shiryaev2008optimal,peskir2006optimal,wang2025analysis,bensoussan2011applications,friedman1982variational,wang2025analysis1,kinderlehrer2000introduction,talbi2023dynamic,liu2025bidirectional,liang2025global,talbi2023viscosity,colaneri2022class,wang2026algebraic,ekstrom2026dynkin}.  Reflected diffusions
and the Skorokhod problem, including oblique reflection in orthants, are treated
in 
\cite{skorokhod1961stochastic,tanaka1979stochastic,wang2026damage,lions1984stochastic,stroock1971diffusion,dupuis1993sdes,yu2026pattern,kang2017submartingale,lipshutz2018directional,pilipenko2024boundary,wang2026breakdown,dianetti2023multidimensional,harrison1981reflected,yu2026rigorous,varadhan1985brownian,reiman1988boundary,taylor1993existence,williams1995semimartingale,yu2026beyond}.
Reflected obstacle problems are naturally
formulated through constrained viscosity solutions with Neumann or oblique
boundary conditions \cite{crandall1992user,lions1985neumann,barles1994solutions,
ishii1991fully,soner1986optimal}.  Our analysis uses the It\^{o}--Tanaka
formula and the potential theory of additive functionals and Revuz measures
\cite{revuz2013continuous,karatzas2014brownian,protter2012stochastic,cai2026optimal,peskir2005change,revuz1970mesures,blumenthal2007markov,dynkin1965markov,wang2025multi,fukushima2011dirichlet,marcus2006markov}.
Multidimensional stopping and the geometry of the
resulting exercise regions has been studied chiefly in the option-pricing
literature \cite{broadie1997valuation,villeneuve1999exercise}; questions of free-boundary
regularity in such problems remain delicate
\cite{dayanik2003optimal,gao2022rolling,de2020global,laurence2009regularity,yu2026from,ke2022parallel,soner2025stopping}. 
The novelty is to combine the diagonal kink measure
generated by \(G=x_1\vee\alpha x_2\) with the boundary measure generated by
oblique reflection, and to formulate the free-boundary condition as a
continuation-side trace condition for the corresponding killed potential.

The analytical difficulty is the simultaneous presence of reflection, killing
at the stopping set, and the nonsmooth obstacle $G=x_1\vee\alpha x_2$. Let
\[
        \Delta:=\{x\in\Rp:x_1=\alpha x_2\},\qquad Y(x):=x_1-\alpha x_2.
\]
Then $G(x)=\alpha x_2+Y(x)^+$, so $G$ is affine on the two components of
$\Rp\setminus\Delta$ but its gradient jumps across $\Delta$. In distributions,
\begin{equation}
\label{eq:intro-D2G}
        D^2G=\frac{(1,-\alpha)^\top(1,-\alpha)}{\sqrt{1+\alpha^2}}\,\sgD,
\end{equation}
where $\sgD$ is one-dimensional surface measure on $\Delta$. Consequently
$\Lc G$, and hence the stopping gain associated with the obstacle, is not a
function but a signed measure carrying a singular component on $\Delta$.

Reflection adds a second mechanism. Because $G$ need not satisfy the reflected
Neumann condition $\rho_i\!\cdot\!\nabla G=0$ on the coordinate faces, the
reflected It\^{o}--Tanaka formula for $G(X)$ contains boundary local-time
terms. The correct object is therefore the total signed stopping
measure
\begin{equation}
\label{eq:intro-total}
        \Gtot=\Gac\,dx+\GD+\Gbd,
\end{equation}
whose three parts are the absolutely continuous interior gain, the diagonal
singular measure, and the signed boundary Revuz measure generated by the
reflection contribution; the precise definitions are given in
Section~\ref{sec:measure-gain}. The associated signed additive functional
$A^{\Gtot}$ produces the bridge identity
\begin{equation}
\label{eq:intro-bridge}
        e^{-rt}G(X_t)-\int_0^t e^{-rs}c(X_s)\,ds
        =G(x)-\int_0^t e^{-rs}\,dA_s^{\Gtot}+M_t^G,
\end{equation}
up to localization, with $M^G$ a local martingale. This identity is the
analytic starting point of the paper.

Let $H:=V-G$, $\Cc:=\{H>0\}$, $\Dc:=\{H=0\}$, and
$\tau_{\Dc}:=\inf\{t\ge0:X_t\in\Dc\}$. The potential representation must be
killed at $\tau_{\Dc}$. For a signed smooth measure $\mu$ define the killed
$r$-resolvent on $\Cc$,
\begin{equation}
\label{eq:intro-killed}
        R_r^{\Cc}\mu(x):=\E_x\!\left[\int_0^{\tau_{\Dc}}e^{-rs}\,dA_s^\mu\right].
\end{equation}
If $\tau_{\Dc}$ is optimal and the additive functionals are integrable, then
\begin{equation}
\label{eq:intro-main}
        V(x)=G(x)-R_r^{\Cc}\Gtot(x),\qquad x\in\Cc.
\end{equation}
The unrestricted reflected resolvent $R_r^{\mathrm R}(\Gtot\bone_{\Cc})$ is
generally wrong: a non-absorbed process re-enters $\Cc$ after $\tau_{\Dc}$, so
the unrestricted potential counts occupation that has no variational meaning.
The free boundary is then characterized by the continuation-side trace
condition
\begin{equation}
\label{eq:intro-trace}
        \Tr_{\partial\Cc}R_r^{\Cc}\Gtot=0,
\end{equation}
which, when $\Dc$ is an epigraph $\{x_2\ge\bb(x_1)\}$, reads
$\lim_{\varepsilon\downarrow0}R_r^{\Cc}\Gtot(x_1,\bb(x_1)-\varepsilon)=0$ at
boundary points where the one-sided trace exists.

\paragraph{Contributions} The main contribution of this paper is a measure-valued killed-resolvent
formulation for an obliquely reflected obstacle problem with the nonsmooth
payoff \(G(x)=x_1\vee\alpha x_2\).  The kink of \(G\) produces a diagonal
surface measure, while the oblique reflection directions produce boundary
Revuz measures.  We combine these terms into a total signed stopping measure
\(\Gamma^{\rm tot}\), prove the associated reflected It\^{o}--Tanaka bridge
identity, and show that the value admits a killed-resolvent representation in
the continuation region.  We further show that the unrestricted reflected
resolvent is generally incorrect, formulate the free boundary through a
continuation-side trace condition, and prove a verification theorem for
epigraph candidates satisfying contact, strict continuation, reflected Neumann
compatibility, growth, trace, and measure-superharmonicity.

The paper is organized as follows.
Section~\ref{sec:problem} formulates the reflected stopping problem and its
obstacle inequality. Section~\ref{sec:measure-gain} derives the distributional
generator of the max payoff and the reflected It\^{o}--Tanaka formula.
Section~\ref{sec:killed-resolvent} introduces the total stopping measure, proves
the killed-resolvent representation, and explains the failure of the
unrestricted resolvent. Section~\ref{sec:free-boundary-trace} gives the
free-boundary trace formulation. Section~\ref{sec:verification} states the
admissibility conditions and proves the verification theorem.
Section~\ref{sec:conclusion} concludes.

\section{Problem formulation and reflected variational inequality}
\label{sec:problem}

\subsection{Reflected diffusion in the quadrant}
\label{subsec:reflected-diffusion}

Write $\Rp=[0,\infty)^2$ and $\Rpp=(0,\infty)^2$. Let $W=(W^1,W^2)$ be a
two-dimensional Brownian motion on a filtered probability space
$(\Omega,\Fc,\F,\PP)$, $\F=(\Fc_t)_{t\ge0}$, satisfying the usual conditions.
We consider the reflected diffusion \eqref{eq:intro-sde} in $\Rp$, where
\[
        \beta:\Rp\to\R^2,\qquad\sigma:\Rp\to\R^{2\times2},\qquad a:=\sigma\sigma^\top.
\]
The cumulative reflection term is
\[
        K_t:=\rho_1L_t^1+\rho_2L_t^2,
\]
so that the reflected SDE may equivalently be written as
\[
        dX_t=\beta(X_t)\,dt+\sigma(X_t)\,dW_t+dK_t .
\]
Each $L^i$ is continuous, adapted, nondecreasing, satisfies $L_0^i=0$, and
obeys the support condition
\begin{equation}
\label{eq:reflection-support}
        \int_0^\infty\bone_{\{X_s^i>0\}}\,dL_s^i=0,\qquad i=1,2,
\end{equation}
so that $L^i$ increases only on the face $F_i:=\{x\in\Rp:x_i=0\}$. We fix the
constant inward reflection vectors
\begin{equation}
\label{eq:reflection-vectors}
        \rho_1=(1,\theta_1),\qquad\rho_2=(\theta_2,1),
        \qquad \theta_1,\theta_2\in\R,
\end{equation}
each normalized so that its $e_i$-component equals $1$; the inward condition
$\rho_i\cdot e_i>0$ holds automatically. The case
$\theta_1=\theta_2=0$, i.e. $\rho_i=e_i$, is normal reflection; the
general case is oblique reflection. For the present payoff and the
reflection vectors $\rho_1=(1,\theta_1)$, $\rho_2=(\theta_2,1)$, the boundary
contribution to the stopping measure vanishes exactly when $\theta_1=\theta_2=0$.

For $f\in C^2(\Rp)$ the interior generator is
\begin{equation}
\label{eq:generator}
        \Lc f(x)=\beta(x)\cdot\nabla f(x)+\tfrac12\tr\!\big(a(x)D^2f(x)\big),
        \qquad x\in\Rpp,
\end{equation}
and the reflected It\^{o} formula reads
\begin{equation}
\label{eq:reflected-ito}
        f(X_t)=f(x)+\int_0^t\Lc f(X_s)\,ds
        +\sum_{i=1}^2\int_0^t(\rho_i\!\cdot\!\nabla f)(X_s)\,dL_s^i
        +\int_0^t\nabla f(X_s)\sigma(X_s)\,dW_s .
\end{equation}
The finite-variation boundary term vanishes for every $t$ if and only if $f$
satisfies the reflected Neumann condition
\begin{equation}
\label{eq:reflected-neumann}
        \rho_i\!\cdot\!\nabla f=0\qquad\text{on }F_i,\quad i=1,2,
\end{equation}
which for normal reflection is the classical condition $\partial_i f=0$ on
$F_i$. This point is essential below, since the payoff $G=x_1\vee\alpha x_2$
does not satisfy \eqref{eq:reflected-neumann} when the reflection is oblique.

\begin{assumption}[Coefficients and well-posedness]
\label{ass:diffusion}
The coefficients $\beta,\sigma$ are continuous and locally Lipschitz on $\Rp$
with linear growth $|\beta(x)|+\|\sigma(x)\|\le K(1+|x|)$, and
$a=\sigma\sigma^\top$ is locally uniformly elliptic in $\Rpp$. We assume
\[
        1-\theta_1\theta_2>0,
\]
a standard sufficient completely-$\mathcal S$ condition
\cite{harrison1981reflected,taylor1993existence} for the orthant
reflection matrix
\[
        R=\begin{pmatrix}1&\theta_2\\\theta_1&1\end{pmatrix},
\]
so that the associated orthant Skorokhod problem is well posed; for every
$x\in\Rp$ the reflected SDE \eqref{eq:intro-sde} admits a unique strong
solution in $\Rp$, and the associated transition semigroup is Feller.
Moreover, the boundary local times $L^1,L^2$ do not charge the corner $\{0\}$.
\end{assumption}

\subsection{The stopping problem}
\label{subsec:stopping-problem}

Fix $r>0$, $\alpha>0$, and a continuous running cost $c:\Rp\to[0,\infty)$. The
obstacle is $G(x)=x_1\vee\alpha x_2$, and the value function is
\eqref{eq:intro-value}, the supremum being taken over the set $\Tc$ of all
$\F$-stopping times. Since $\tau=0$ is admissible,
\begin{equation}
\label{eq:V-dominates-G}
        V\ge G\qquad\text{on }\Rp.
\end{equation}
We define the stopping advantage, continuation set, and stopping set by
\begin{equation}
\label{eq:HCD}
        H:=V-G,\qquad \Cc:=\{H>0\},\qquad \Dc:=\{H=0\},
\end{equation}
and the first entry time $\tau_{\Dc}:=\inf\{t\ge0:X_t\in\Dc\}$.

\begin{assumption}[Growth and integrability]
\label{ass:growth}
There exist $\Psi\in C^2(\Rp)$, constants $K_\Psi,\lambda_\Psi>0$, and $m\ge1$
such that
\[
        1+|x|^m\le K_\Psi\Psi(x),\qquad G(x)+c(x)\le K_\Psi\Psi(x),
\]
\begin{equation}
\label{eq:lyapunov}
        \Lc\Psi(x)\le\lambda_\Psi\Psi(x)+K_\Psi,\qquad x\in\Rpp,
\end{equation}
together with the boundary compatibility $\rho_i\!\cdot\!\nabla\Psi\le0$ on
$F_i$, $i=1,2$, the discount condition $r>\lambda_\Psi$, and the integrability
\[
        \E_x\Big[\sup_{t\ge0}e^{-rt}\Psi(X_t)\Big]<\infty,\qquad x\in\Rp .
\]
\end{assumption}

Applying \eqref{eq:reflected-ito} to $\Psi$ and using
$\rho_i\!\cdot\!\nabla\Psi\le0$ to discard the boundary term, followed by
Gronwall's inequality, gives
\begin{equation}
\label{eq:Psi-bound}
        \E_x[\Psi(X_t)]\le C(1+\Psi(x))e^{\lambda_\Psi t},
\end{equation}
hence
$\int_0^\infty e^{-rt}\E_x[\Psi(X_t)]\,dt\le C(1+\Psi(x))$. In particular the
cost integral is finite and $V$ has at most $\Psi$-growth.

\begin{proposition}[Finiteness and regularity]
\label{prop:finiteness}
Under Assumptions~\ref{ass:diffusion} and \ref{ass:growth}, $V$ is finite and
$G(x)\le V(x)\le C(1+\Psi(x))$. Moreover $V$ is lower semicontinuous; if the
reflected semigroup is locally strong Feller, then $V$ is continuous, and
$\Dc=\{V=G\}$ is closed.
\end{proposition}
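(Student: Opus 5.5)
The plan is to treat the three assertions in turn: bounds, lower semicontinuity, and continuity under the local strong Feller property.

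\textbf{Bounds.} The lower bound $V\ge G$ is \eqref{eq:V-dominates-G}. For the upper bound, fix $\tau\in\Tc$ and drop the nonpositive cost term. Then
\[
e^{-r\tau}G(X_\tau)\le K_\Psi\sup_{t\ge0}e^{-rt}\Psi(X_t)
\]
on $\{\tau<\infty\}$. On $\{\tau=\infty\}$ we interpret the payoff as $\limsup_t e^{-rt}G(X_t)$, which is dominated by the same supremum. Hence $V(x)\le K_\Psi\E_x[\sup_t e^{-rt}\Psi(X_t)]$, and this is finite by Assumption~\ref{ass:growth}. To obtain the explicit form $C(1+\Psi(x))$, apply the reflected It\^o formula \eqref{eq:reflected-ito} to $e^{-rt}\Psi(X_t)$. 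By \eqref{eq:lyapunov} and $\rho_i\cdot\nabla\Psi\le0$,
\[
e^{-rt}\Psi(X_t)\le \Psi(x)+\int_0^t e^{-rs}K_\Psi\,ds+M_t-\int_0^te^{-rs}(r-\lambda_\Psi)\Psi(X_s)\,ds ,
\]
with $M$ a local martingale. Localizing with stopping times $\tau\wedge\tau_n$, taking expectations, and applying Fatou's lemma gives $\E_x[e^{-r\tau}\Psi(X_\tau)]\le\Psi(x)+K_\Psi/r$ for every $\tau$. Together with $G\le K_\Psi\Psi$ this yields $V\le C(1+\Psi)$. Finiteness follows because the cost integral is bounded by $C(1+\Psi(x))$ via \eqref{eq:Psi-bound}; it is in any case nonnegative and only lowers $V$.

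\textbf{Lower semicontinuity.} Write $J(x,\tau)$ for the objective, so $V(x)=\sup_\tau J(x,\tau)$. I would realize all starting points on one probability space through the strong solution map $x\mapsto X^x$ of Assumption~\ref{ass:diffusion}. The Skorokhod map is Lipschitz under the completely-$\mathcal S$ condition, and the coefficients are locally Lipschitz, so $X^{x_n}\to X^x$ uniformly on compacts in probability as $x_n\to x$. It then suffices to show that $x\mapsto J(x,\tau)$ is continuous for each fixed deterministic time $\tau=t$, or more generally for stopping times of the driving noise. Continuity comes from $G,c$ continuous plus uniform integrability from the $\Psi$ bound. The uniform integrability is exactly where the growth assumption is needed: one needs $\sup_n\E_{x_n}[\sup_t e^{-rt}\Psi(X_t)]<\infty$ locally uniformly in $x$. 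I would either add a local uniformity remark or get it from a moment estimate using $r>\lambda_\Psi$ with a slightly larger exponent; this is the step I expect to need care. A cleaner route uses the Markov characterization: $V=\sup_{t\in\mathbb{Q}_+,\text{finite families}}$ of iterated operators, i.e.\ $V$ is the increasing limit of $V_n$, where $V_n$ is the value restricted to stopping on the dyadic grid $2^{-n}\mathbb N\cap[0,n]$. Each $V_n$ is obtained by finitely many applications of $f\mapsto \max\{G,e^{-rh}P_hf-\int\cdots\}$. The Feller property makes each $V_n$ continuous, and $V_n\uparrow V$ by standard approximation of stopping times from above together with right continuity of paths and the integrable domination. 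An increasing limit of continuous functions is lower semicontinuous.

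\textbf{Continuity and closedness.} If the semigroup is locally strong Feller, then for $h>0$ the function $x\mapsto\E_x[e^{-rh}V(X_h)]$ is continuous, since $V$ is locally bounded with $\Psi$-growth and a truncation argument applies. Then I would use
\[
|V(x)-\E_x[e^{-rh}V(X_h)-\textstyle\int_0^h e^{-rs}c\,ds]|\to0\quad\text{locally uniformly as }h\downarrow0 .
\]
The bound $\le$ direction comes from the dynamic programming principle and the bound for $\tau\wedge h$; the estimate is via the modulus of $G$ along paths and $\E\sup_{s\le h}|X_s-x|\to0$ locally uniformly. Then $V$ is a locally uniform limit of continuous functions and hence continuous. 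Finally, $\Dc=\{V-G=0\}=\{V-G\le0\}$ is the preimage of a closed set under a continuous map, so it is closed. The main obstacle is the locally uniform short-time estimate, specifically the uniform control of $\sup_{\tau\le h}$ errors; I would handle it through the dynamic programming principle $V(x)=\sup_\tau\E_x[\cdots]$ restricted to $\tau\ge h$ versus $\tau<h$, bounding the latter by $\E_x\sup_{s\le h}|G(X_s)-G(x)|$ plus $O(h)$.
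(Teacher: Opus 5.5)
Your bounds step is the paper's argument: the reflected It\^o formula for $e^{-rt}\Psi(X_t)$, the boundary term discarded via $\rho_i\cdot\nabla\Psi\le0$, localization, and Fatou to get $\E_x[e^{-r\tau}\Psi(X_\tau)]\le\Psi(x)+K_\Psi/r$. The paper calls the last step ``monotone convergence'', but Fatou is what is needed.

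For the regularity claims you take a genuinely different and more explicit route.

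\emph{Lower semicontinuity.} The paper argues ``fix a stopping time, use the Feller property, take the supremum.'' That works only in a strong formulation with a continuous flow $x\mapsto X^x$. On the canonical space, $x\mapsto J(x,\tau)$ need not be lower semicontinuous for a fixed $\tau$. For example, if $\tau$ is the hitting time of a single polar point $z$, then $J(z,\tau)=G(z)$ while $J(x,\tau)\le0$ for $x\ne z$. Your dyadic scheme avoids this. Take $V_n$ as the $n2^n$-fold iterate of $f\mapsto\max\{G,S_hf\}$ with $h=2^{-n}$ and $S_hf(x):=\E_x[e^{-rh}f(X_h)-\int_0^he^{-rs}c(X_s)\,ds]$; each iterate is continuous, and $V_n\uparrow V$ by rounding stopping times up to the grid. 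This is the standard Snell-envelope argument and uses only the Feller property.

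\emph{Continuity.} The paper's ``finite-horizon approximations and monotone convergence'' again gives only lower semicontinuity. Your bound does more: comparing $\tau$ with $\tau\vee h$ and using $\sup_{\tau\ge h}J(\cdot,\tau)\le S_hV$, you get $0\le V-S_hV\le 2\max(1,\alpha)\,\E_x\sup_{s\le h}|X_s-x|+rh\,\E_x\sup_{s\le h}G(X_s)+\E_x\int_0^hc(X_s)\,ds$, locally uniformly in $x$. Combined with strong-Feller continuity of $S_hV$, this supplies the upper semicontinuity. On this point your argument is more complete than the paper's.

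Two repairs are still needed.

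First, drop route (a) as you justified it. The completely-$\mathcal S$ condition does not by itself give a single-valued Skorokhod map, let alone a Lipschitz one. The standard sufficient condition is a Harrison--Reiman-type contraction, spectral radius of $|I-R|$ below one, which here means $|\theta_1\theta_2|<1$. This can fail under the assumed $1-\theta_1\theta_2>0$, for instance with $\theta_1=2$, $\theta_2=-2$. The paper's hypotheses never provide a continuous flow, so route (a) would need an extra assumption.

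Second, both the dyadic iteration and the strong-Feller step apply $S_h$ to unbounded functions of $\Psi$-growth, such as $G$, $V$ and $c$. The (strong) Feller property controls bounded functions only. Your truncation argument therefore needs uniform integrability of $\Psi(X_h)$ under $\PP_x$, uniformly for $x$ in compacts. Continuity of $x\mapsto\E_x\Psi(X_h)$, or a Lyapunov function growing strictly faster than $\Psi$, would suffice. Assumption~\ref{ass:growth} gives only pointwise integrability and the locally bounded estimate \eqref{eq:Psi-bound}, so this must be added as a hypothesis or proved. You correctly flagged it as the delicate step, and the paper's proof is silent on it.
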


\begin{proof}
The lower bound is \eqref{eq:V-dominates-G}. For the upper bound, by the
reflected It\^{o} formula and the boundary compatibility
$\rho_i\!\cdot\!\nabla\Psi\le0$,
\[
        e^{-rt}\Psi(X_t)\le\Psi(x)
        +\int_0^t e^{-rs}\big[-(r-\lambda_\Psi)\Psi(X_s)+K_\Psi\big]\,ds+M_t.
\]
After localization, the process
\[
        e^{-rt}\Psi(X_t)-\int_0^t e^{-rs}K_\Psi\,ds
\]
is bounded above by a local martingale plus the initial value \(\Psi(x)\).
Taking expectations and then letting the localization parameter tend to
infinity gives
\[
        \E_x[e^{-r\tau}\Psi(X_\tau)]
        \le
        \Psi(x)+\frac{K_\Psi}{r}
\]
for every bounded stopping time \(\tau\), and then for all stopping times by
monotone convergence.
Since $G+c\le K_\Psi\Psi$ and $c\ge0$,
\[
        V(x)\le\sup_{\tau}\E_x\big[e^{-r\tau}G(X_\tau)\big]
        \le K_\Psi\Big(\Psi(x)+\frac{K_\Psi}{r}\Big)\le C(1+\Psi(x))<\infty .
\]
The integrability hypothesis of Assumption~\ref{ass:growth} ensures, in
addition, the uniform integrability used in the limiting arguments below. Lower
semicontinuity follows by fixing
a stopping time, using the Feller property and continuity of $G,c$, and taking
the supremum. Under the local strong Feller hypothesis, stability of
finite-horizon approximations and monotone convergence give continuity, whence
$\Dc=\{V-G=0\}$ is closed.
\end{proof}

\subsection{Dynamic programming and the reflected variational inequality}
\label{subsec:dpp}

For $T>0$ let $V_T(x)=\sup_{\tau\le T}\E_x[e^{-r\tau}G(X_\tau)
-\int_0^\tau e^{-rs}c(X_s)\,ds]$; then $V_T\uparrow V$. The strong Markov
property gives, for every stopping time $\rho$,
\begin{equation}
\label{eq:dpp}
        V(x)=\sup_{\tau\in\Tc}\E_x\!\left[
        e^{-r(\tau\wedge\rho)}V(X_{\tau\wedge\rho})
        -\int_0^{\tau\wedge\rho}e^{-rs}c(X_s)\,ds\right].
\end{equation}

\begin{proposition}[Dynamic programming]
\label{prop:dpp}
Under Assumptions~\ref{ass:diffusion}--\ref{ass:growth}, $V$ satisfies
\eqref{eq:dpp}; consequently $V\ge G$ and $\Lc V-rV-c\le0$ in the viscosity
sense in $\Rpp$, with $\Lc V-rV-c=0$ in $\Cc$.
\end{proposition}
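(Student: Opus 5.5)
We prove the dynamic programming identity \eqref{eq:dpp} first, and then read off the viscosity inequalities from it by the standard test-function arguments. \emph{Step 1 (DPP).} Fix a stopping time $\rho$. For the inequality ``$\le$'', take any $\tau\in\Tc$ and split the reward at $\tau\wedge\rho$. On $\{\tau>\rho\}$, condition on $\Fc_\rho$ and use the strong Markov property of the reflected diffusion; this holds because Assumption~\ref{ass:diffusion} gives a unique strong solution with a Feller semigroup. The conditional expected reward from $\rho$ onward, discounted by $e^{-r\rho}$, is then at most $e^{-r\rho}V(X_\rho)$. On $\{\tau\le\rho\}$ we simply bound $G(X_\tau)\le V(X_\tau)$. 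Taking the supremum over $\tau$ gives ``$\le$''.

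For ``$\ge$'' we need $\varepsilon$-optimal stopping times that can be pasted together measurably. First, by lower semicontinuity of $V$ (Proposition~\ref{prop:finiteness}), each point $y$ has an $\varepsilon$-optimal $\tau^y$ that remains $2\varepsilon$-optimal for $y'$ in a small neighbourhood; this uses continuity of the laws in the initial point and of $G,c$. Second, we partition $\Rp$ into countably many Borel pieces $B_k$ with representatives $y_k$. Third, we set $\hat\tau=\tau\wedge\rho$ on $\{\tau<\rho\}$ and $\hat\tau=\rho+\tau^{y_k}\circ\theta_\rho$ on $\{\tau\ge\rho,\ X_\rho\in B_k\}$. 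Integrability for all limits comes from the bound $\E_x[\sup_t e^{-rt}\Psi(X_t)]<\infty$ together with $|V|\le C(1+\Psi)$. If this pasting is delicate, the alternative is to pass through the finite-horizon values $V_T\uparrow V$, where continuity is easier, and then use monotone convergence. The pasting construction, and the semicontinuity needed to make it uniform, is the main technical obstacle.

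\emph{Step 2 (supersolution).} The inequality $V\ge G$ is \eqref{eq:V-dominates-G}. Let $\varphi\in C^2$ touch $V$ from below at $x_0\in\Rpp$. Take $\rho=h\wedge\tau_B$, where $\tau_B$ is the exit time from a small ball $B\subset\Rpp$; before $\tau_B$ the process does not touch the faces, so no local time enters. Choosing $\tau=\rho$ in \eqref{eq:dpp} gives
\[
\varphi(x_0)=V(x_0)\ge\E_{x_0}\!\left[e^{-r\rho}\varphi(X_\rho)-\int_0^\rho e^{-rs}c\,ds\right].
\]
Apply It\^o's formula \eqref{eq:reflected-ito} to $e^{-rt}\varphi$; the boundary term vanishes inside $B$. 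Divide by $\E[\rho]\sim h$ and let $h\to0$. This yields $\Lc\varphi-r\varphi-c\le0$ at $x_0$.

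\emph{Step 3 (equation in $\Cc$).} If $x_0\in\Cc$, then lower semicontinuity of $V-G$ gives $V>G+\delta$ on a small ball $B$. We first argue that stopping before $\tau_B$ is suboptimal up to an error that vanishes as $h\to0$. Combining \eqref{eq:dpp} with $\rho=h\wedge\tau_B$ and this fact, we get near-equality
\[
V(x_0)\le\E\!\left[e^{-r\rho}V(X_\rho)-\int_0^\rho e^{-rs}c\right]+o(h).
\]
Now let $\varphi$ touch $V$ from above. The same It\^o argument gives $\Lc\varphi-r\varphi-c\ge0$, so $V$ is a viscosity solution in $\Cc$.

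To make Step 3 rigorous without a continuity assumption, we would use the usc envelope of $V$, or work under the local strong Feller hypothesis of Proposition~\ref{prop:finiteness}, which gives continuity.
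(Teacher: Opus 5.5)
Your overall plan is the paper's: a strong-Markov DPP followed by testing against $\rho=h\wedge\tau_B$ for small balls $B\subset\Rpp$. Your fallback (prove the DPP for $V_T$, where continuity is available, then let $T\to\infty$ monotonically) is exactly the route the paper takes. Your primary argument for the hard half, however, fails at the step you yourself single out. Write $J(y,\tau):=\E_y[e^{-r\tau}G(X_\tau)-\int_0^\tau e^{-rs}c(X_s)\,ds]$. Lower semicontinuity of $V$ does not make $\tau^y$ uniformly $2\varepsilon$-optimal near $y$. Continuity of $y'\mapsto J(y',\tau^y)$ gives $J(y',\tau^y)\ge V(y)-\tfrac32\varepsilon$ near $y$. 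To turn this into $J(y',\tau^y)\ge V(y')-2\varepsilon$ you need $V(y')\le V(y)+\tfrac12\varepsilon$, which is \emph{upper} semicontinuity at $y$; lsc gives the opposite inequality. Within your route the repair is to paste relative to a continuous minorant $\varphi\le V$. Then $J(\cdot,\tau^y)-\varphi$ is lsc and $\ge-\varepsilon$ at $y$, hence $\ge-2\varepsilon$ on a neighbourhood, and the countable pasting gives $V(x)\ge\E_x[e^{-r\sigma}\varphi(X_\sigma)-\int_0^\sigma e^{-rs}c(X_s)\,ds]-2\varepsilon$. Now let Lipschitz $\varphi_n\uparrow V$ (e.g.\ $\varphi_n(x)=\inf_y\{V(y)+n|x-y|\}$, valid since $V$ is lsc and $V\ge G\ge0$) and use monotone convergence. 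Note that continuity of $J(\cdot,\tau^y)$ for a fixed stopping time needs pathwise stability of the strong-solution flow on one probability space, not merely continuity of the laws.

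Second, \eqref{eq:dpp} as printed has $V(X_{\tau\wedge\rho})$ on both events. Its ``$\le$'' half is therefore trivial ($\tau=0$), and its ``$\ge$'' half is the supermartingale inequality $V(x)\ge\E_x[e^{-r\sigma}V(X_\sigma)-\int_0^\sigma e^{-rs}c(X_s)\,ds]$ at $\sigma=\tau\wedge\rho$. Your pasting stops at $\tau$ on $\{\tau<\rho\}$ and so only delivers $G(X_\tau)$ there. To get \eqref{eq:dpp} you must paste at $\sigma=\tau\wedge\rho$ on all of $\Omega$. Conversely, Step 3 cannot be extracted from \eqref{eq:dpp} as printed, which carries no information about stopping. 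It needs the standard form with $e^{-r\tau}G(X_\tau)\bone_{\{\tau<\rho\}}+e^{-r\rho}V(X_\rho)\bone_{\{\tau\ge\rho\}}$, which your splitting argument yields before you replace $G$ by $V$. With it, take an $\varepsilon$-optimal $\tau_\varepsilon$; the margin $V-G>\delta$ on $B$ and the supermartingale inequality give $\PP_{x_0}(\tau_\varepsilon<\rho)\le\varepsilon e^{rh}/\delta$. Hence $V(x_0)\le\E_{x_0}[e^{-r\rho}V(X_\rho)-\int_0^\rho e^{-rs}c(X_s)\,ds]+O(\varepsilon)$ for every $\varepsilon>0$. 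Letting $\varepsilon\downarrow0$ gives an exact identity, which is what makes your ``$o(h)$'' claim in Step 3 rigorous.
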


\begin{proof}
The finite-horizon values satisfy the dynamic programming principle by the
strong Markov property, and the estimates following
Assumption~\ref{ass:growth} justify passing to the infinite-horizon limit.
Testing \eqref{eq:dpp} against small exit times from balls in $\Rpp$ gives the
two viscosity inequalities; on the open set $\Cc=\{V>G\}$ the obstacle is
inactive and they combine to the continuation equation.
\end{proof}

Thus $V$ is a constrained viscosity solution of
\begin{equation}
\label{eq:reflected-obstacle}
        \max\{\Lc V-rV-c,\;G-V\}=0\quad\text{in }\Rpp,
\end{equation}
with the reflection condition imposed on the coordinate faces. We record the
boundary condition only in the following minimal form, which is all that the
present paper requires.

\begin{remark}[Boundary condition; no comparison principle used]
\label{rem:boundary-condition}
On the faces $F_i$ the reflection condition is understood in the constrained
viscosity sense associated with reflection along $\rho_i$, in the relaxed
(disjunctive) Barles--Lions form. Whenever $V$ is differentiable at a boundary
point, this condition reduces to the reflected Neumann condition
$\rho_i\!\cdot\!\nabla V=0$, $i=1,2$. We shall not invoke a comparison
principle for the reflected viscosity problem: the identification of $V$ is
obtained entirely from the reflected It\^{o}--Tanaka formula and the
verification theorem of Section~\ref{sec:verification}.
\end{remark}

\begin{theorem}[Reflected obstacle characterization]
\label{thm:reflected-obstacle}
Under Assumptions~\ref{ass:diffusion}--\ref{ass:growth}, $V$ is a constrained
viscosity solution of \eqref{eq:reflected-obstacle} with reflection condition
$\rho_i\!\cdot\!\nabla V=0$ on $F_i$ in the sense of
Remark~\ref{rem:boundary-condition}, and
\[
        V=G\ \text{on }\Dc,\qquad \Lc V-rV-c=0\ \text{in }\Cc
\]
in the viscosity sense.
\end{theorem}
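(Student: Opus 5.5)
The plan is to assemble Theorem~\ref{thm:reflected-obstacle} almost entirely from Propositions~\ref{prop:finiteness} and \ref{prop:dpp}, adding only a separate boundary argument on the faces $F_i$. There are four pieces: the identity $V=G$ on $\Dc$, the interior variational inequality in $\Rpp$, the continuation equation in $\Cc$, and the reflected boundary condition in the relaxed sense of Remark~\ref{rem:boundary-condition}.

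First, $V=G$ on $\Dc$ holds by the definition \eqref{eq:HCD} of $\Dc=\{H=0\}$. By \eqref{eq:V-dominates-G} we also have $G-V\le0$ everywhere, so the obstacle part of \eqref{eq:reflected-obstacle} is in place. Proposition~\ref{prop:finiteness} supplies finiteness and lower semicontinuity of $V$. Where needed I would work with the upper and lower semicontinuous envelopes $V^*$ and $V_*$, using $V_*=V$ and the $\Psi$-growth bound to keep the test-function arguments local.

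Second, the interior inequalities come from Proposition~\ref{prop:dpp}. Let $\varphi\in C^2$ touch $V$ from above at $x_0\in\Rpp$ (or touch $V^*$ if $V$ is not continuous). Take $\tau=0$ against $\rho=h\wedge\tau_B$ in \eqref{eq:dpp}, where $\tau_B$ is the exit time from a small ball $B\subset\Rpp$; no local time is active before $\tau_B$. Applying \eqref{eq:reflected-ito} to $e^{-rt}\varphi(X_t)$, dividing by $\E[h\wedge\tau_B]$ and letting $h\downarrow0$ gives $\Lc\varphi-r\varphi-c\le0$ at $x_0$, so $V$ is a viscosity supersolution. For the subsolution property at $x_0\in\Cc$, I would argue by contradiction. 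Suppose $\varphi$ touches $V$ from below at $x_0$ and $\Lc\varphi-r\varphi-c<0$ near $x_0$. Since $\Cc$ is open (by Proposition~\ref{prop:finiteness} under continuity, or via the lsc envelope otherwise), $V>G+\eta$ on a small ball around $x_0$. For any $\tau$ that is $\varepsilon$-optimal, \eqref{eq:dpp} with $\rho=\tau_B$ and It\^o's formula for $\varphi$ then yield a strict gain of order $\E[\tau\wedge\tau_B]$, which contradicts optimality. This gives $\Lc V-rV-c=0$ in $\Cc$ in the viscosity sense, and together with $V\ge G$ it gives \eqref{eq:reflected-obstacle}.

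Third, and this is the step I expect to be the main obstacle, comes the boundary condition on $F_i$. Take $x_0\in F_i\setminus\{0\}$ and a test function $\varphi$ touching from above. I would run the same argument with $\rho=h\wedge\tau_B$, where now $B$ is a half-ball, so the boundary integral $\int(\rho_i\cdot\nabla\varphi)\,dL^i$ from \eqref{eq:reflected-ito} survives. Suppose both $\rho_i\cdot\nabla\varphi(x_0)>0$ and $\Lc\varphi-r\varphi-c>0$ at $x_0$, the negation of the disjunctive Barles--Lions condition for supersolutions. Then the right-hand side is strictly positive in expectation, which contradicts $V\ge$ the DPP value. The subsolution side in $\Cc\cap F_i$ is symmetric. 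The delicate point is to compare the orders of $\E[L^i_{h\wedge\tau_B}]$ and $\E[h\wedge\tau_B]$. I would avoid that comparison by using the disjunctive form, which only needs both terms to have a strict sign, so each contributes nonnegatively. The corner is excluded via Assumption~\ref{ass:diffusion}, since the $L^i$ do not charge $\{0\}$. Finally, where $V$ is differentiable at $x_0\in F_i$, I would take test functions from above and below with gradient $\nabla V(x_0)$. Localization with $L^i$ increasing at positive rate relative to time on $F_i$ then forces $\rho_i\cdot\nabla V(x_0)=0$, which is the reduction stated in Remark~\ref{rem:boundary-condition}.
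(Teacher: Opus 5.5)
Your decomposition matches the paper's proof: contact from the definition of $\Dc$, interior inequalities from \eqref{eq:dpp} over small exit times (Proposition~\ref{prop:dpp}), disjunctive boundary alternatives from small stopped half-balls at $F_i\setminus\{0\}$ with the corner excluded, and a final reduction at differentiability points. As written, however, the test-function directions are reversed in all three viscosity arguments, and in that form the steps do not go through.

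Choosing $\tau=0$ in \eqref{eq:dpp} only gives the trivial bound $V(x_0)\ge V(x_0)$. The inequality you need comes from not stopping before $\vartheta:=h\wedge\tau_B$ (take any $\tau\ge\vartheta$): $V(x_0)\ge\E_{x_0}[e^{-r\vartheta}V(X_\vartheta)-\int_0^\vartheta e^{-rs}c(X_s)\,ds]$. Replacing $V(X_\vartheta)$ by $\varphi(X_\vartheta)$ here requires $V\ge\varphi$, i.e.\ $\varphi$ touching $V$ from \emph{below}. That is the correct test both for $\Lc\varphi-r\varphi-c\le0$ in $\Rpp$ and for your boundary contradiction. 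In the latter, $\rho_i\cdot\nabla\varphi>0$ and $\Lc\varphi-r\varphi-c>0$ near $x_0$ give $0\ge\E_{x_0}\big[\int_0^\vartheta e^{-rs}(\Lc\varphi-r\varphi-c)(X_s)\,ds+\int_0^\vartheta e^{-rs}(\rho_i\cdot\nabla\varphi)(X_s)\,dL^i_s\big]>0$.

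Conversely, the continuation inequality $\Lc\varphi-r\varphi-c\ge0$ in $\Cc$ needs an \emph{upper} bound on the payoff of an $\varepsilon$-optimal $\tau$, hence $V\le\varphi$, i.e.\ touching from above. The loss must also be bounded below uniformly in $\tau$. A ``gain of order $\E[\tau\wedge\tau_B]$'' can be arbitrarily small. What gives a $\tau$-independent loss is the gap $V\ge G+\eta$ on $B$, which penalizes stopping before $\tau_B$, combined with the strict drift sign up to $\tau_B$. The envelopes swap accordingly: $V_*=V$ for the supersolution, $V^*$ for the subsolution. With these corrections the arguments become the standard ones the paper invokes.

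The final reduction to $\rho_i\cdot\nabla V(x_0)=0$ is where your sketch, like the paper's proof (which only asserts it), does not close. It needs exactly the order comparison you set out to avoid. To rule out the interior alternative of the disjunction, the local-time term must dominate, $\E_{x_0}[L^i_{h\wedge\tau_B}]\gg\E_{x_0}[h\wedge\tau_B]$. Local time grows like $\sqrt h$, and since $dL^i$ is singular with respect to $dt$ there is no ``positive rate relative to time''. Analytically, one instead perturbs $\varphi$ by $\pm(Kx_i^2-\delta x_i)$ and lets $K\to\infty$. Either route uses $a_{ii}>0$ on $F_i$, which Assumption~\ref{ass:diffusion} does not provide, since ellipticity is assumed only in $\Rpp$. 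Either route also needs $C^2$ test functions on both sides of $x_0$, which differentiability alone does not supply.

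More seriously, the test from above carries no information at $x_0\in\Dc\cap F_i$, because there the obstacle alternative $V=G$ already satisfies the subsolution condition. For example, if $\theta_1<0$ and $\Dc$ contains a relative neighbourhood of some $x_0\in F_1$, then $V=G=\alpha x_2$ near $x_0$ and $\rho_1\cdot\nabla V(x_0)=\alpha\theta_1\neq0$. So the Neumann equality can at best be derived on $\Cc\cap F_i$, under normal nondegeneracy and enough regularity. On $\Dc\cap F_i$ only the one-sided relaxed inequality from the supersolution side survives.
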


\begin{proof}
The interior statement is Proposition~\ref{prop:dpp}. At boundary points the
reflected It\^{o} formula \eqref{eq:reflected-ito} applied to a test function
contains the term $\sum_i\int(\rho_i\!\cdot\!\nabla\varphi)\,dL^i$, supported on
$F_i$; the dynamic programming argument over small stopped intervals yields the
relaxed reflection alternatives, which at differentiability points give
$\rho_i\!\cdot\!\nabla V=0$. The contact identity is the definition of $\Dc$,
and the continuation equation holds because the obstacle is inactive on $\Cc$.
\end{proof}

\section{Distributional generator and reflected It\^{o}--Tanaka formula}
\label{sec:measure-gain}

\subsection{Distributional generator of the max payoff}
\label{subsec:distributional-generator}

Set $\Delta=\{x_1=\alpha x_2\}$, $E_1=\{x_1>\alpha x_2\}$,
$E_2=\{x_1<\alpha x_2\}$. On $E_1$ and $E_2$ respectively $G=x_1$ and
$G=\alpha x_2$, so $\nabla G=e_1$ on $E_1$ and $\nabla G=\alpha e_2$ on $E_2$.
Writing $Y(x)=x_1-\alpha x_2$ we have $G(x)=\alpha x_2+Y(x)^+$, and since
$d^2(y^+)/dy^2=\delta_0$,
\begin{equation}
\label{eq:D2G-measure}
        D^2G=\frac{(1,-\alpha)^\top(1,-\alpha)}{\sqrt{1+\alpha^2}}\,\sgD,
\end{equation}
where we used
$\delta_{\{x_1-\alpha x_2=0\}}=|\nabla(x_1-\alpha x_2)|^{-1}\sgD
=(1+\alpha^2)^{-1/2}\sgD$. Therefore, in the distributional sense on $\Rpp$,
\begin{equation}
\label{eq:LG-measure}
        \Lc G(dx)=\Lac G(x)\,dx+\frac{q(x)}{2\sqrt{1+\alpha^2}}\,\sgD(dx),
        \qquad \Lac G:=\beta_1\bone_{E_1}+\alpha\beta_2\bone_{E_2},
\end{equation}
with
\begin{equation}
\label{eq:q-def}
        q(x):=(1,-\alpha)\,a(x)\,(1,-\alpha)^\top>0
        \quad\text{on compacts of }\Rpp.
\end{equation}

Define the preliminary interior stopping gain $\Gamma:=c+rG-\Lc G$. Then
$\Gamma=\Gac\,dx+\GD$, where
\begin{equation}
\label{eq:Gamma-ac}
        \Gac(x)=c(x)+rG(x)-\beta_1(x)\bone_{E_1}(x)-\alpha\beta_2(x)\bone_{E_2}(x),
\end{equation}
\begin{equation}
\label{eq:Gamma-delta}
        \GD(dx)=-\frac{q(x)}{2\sqrt{1+\alpha^2}}\,\sgD(dx).
\end{equation}
The sign is negative because the singular part of $\Lc G$ is positive and
$\Gamma=c+rG-\Lc G$.

\begin{proposition}[Distributional stopping gain]
\label{prop:distributional-gain}
Under Assumption~\ref{ass:diffusion}, $c+rG-\Lc G$ is a signed Radon measure on
compact subsets of $\Rpp$, with absolutely continuous and diagonal singular
parts given by \eqref{eq:Gamma-ac} and \eqref{eq:Gamma-delta}.
\end{proposition}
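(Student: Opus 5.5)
We establish the identity directly in the sense of distributions on $\Rpp$, by testing against $\varphi\in C_c^\infty(\Rpp)$ and using only continuity of the coefficients from Assumption~\ref{ass:diffusion}. The terms $c$ and $rG$ are continuous functions, hence locally integrable, so they contribute only to the absolutely continuous part. All the work is in $\Lc G$, which must be read distributionally since $G\notin C^2$.

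The plan is to identify $\Lc G$ as a measure term by term. Since $\beta$ is only continuous, we cannot move derivatives onto $\beta$ or $a$. Instead we use that $G$ is Lipschitz and $D^2G$ is a measure, so that $\Lc G:=\beta\cdot\nabla G+\tfrac12\sum_{jk}a_{jk}\,\partial_{jk}G$ is a well-defined product of continuous functions with Radon measures.

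\textbf{First-order part.} $G$ is Lipschitz with a.e. gradient $\nabla G=e_1\bone_{E_1}+\alpha e_2\bone_{E_2}$, since $\Delta$ is Lebesgue-null. The weak gradient equals this a.e. gradient, giving the absolutely continuous term $\beta_1\bone_{E_1}+\alpha\beta_2\bone_{E_2}$.

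\textbf{Second-order part.} Write $G=\alpha x_2+Y^+$ with $Y$ linear. Then $D^2G=D^2(Y^+)$. For a test function $\varphi$, integrate $\int Y^+\partial_{jk}\varphi\,dx$ by parts twice, once on each side of $\Delta$. Equivalently, change coordinates to $(y,s)$ with $y=Y(x)$ and $s$ along $\Delta$. Using $(y^+)''=\delta_0$ in the normal variable, one gets $\partial_{jk}(Y^+)=v_jv_k\,\delta_{\{Y=0\}}$ with $v=(1,-\alpha)$. The coarea factor $|\nabla Y|^{-1}=(1+\alpha^2)^{-1/2}$ converts $\delta_{\{Y=0\}}$ into $\sgD$, which yields \eqref{eq:D2G-measure}. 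Contracting with the continuous matrix $\tfrac12 a$ gives $\frac{q}{2\sqrt{1+\alpha^2}}\sgD$, so \eqref{eq:LG-measure} follows. Then $c+rG-\Lc G=\Gac\,dx+\GD$, matching \eqref{eq:Gamma-ac}–\eqref{eq:Gamma-delta}.

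\textbf{Radon property.} On a compact $K\subset\Rpp$, the total variation is bounded by $\int_K|\Gac|\,dx+\sup_K|a|\,\sigma_\Delta(K\cap\Delta)\,(1+\alpha^2)^{1/2}/2$. Both terms are finite because $c,G,\beta,a$ are continuous and $\Delta\cap K$ is a compact segment. The decomposition into absolutely continuous and singular parts is the Lebesgue decomposition, since $\sgD\perp dx$. Positivity of $q$ on compacts follows from local uniform ellipticity, $q\ge\lambda_K|v|^2$, though it is not needed for the measure statement.

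\textbf{Main obstacle.} The main obstacle is justifying $\Lc G$ as a distribution when $\beta$ and $a$ are merely continuous. We avoid it by defining the generator in non-divergence form and multiplying measures by continuous functions, rather than using the adjoint $\Lc^*\varphi$, which would require derivatives of the coefficients. If one insists on the adjoint form, we would mollify the coefficients, verify the identity for smooth coefficients, and pass to the limit using uniform convergence on compacts. This works because $\nabla G$ is bounded and $D^2G$ is a finite measure on $K$.
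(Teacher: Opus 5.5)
Your proposal is correct and takes the same route as the paper. The first-order part is read off from the piecewise-affine structure of $G$ on $E_1,E_2$, and the singular part comes from contracting the measure $D^2G$ of \eqref{eq:D2G-measure} with $\tfrac12 a$; your extra steps (non-divergence interpretation of $\Lc G$, the integration by parts across $\Delta$, and the local total-variation bound) make explicit what the paper's three-line proof leaves implicit.
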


\begin{proof}
$G$ is affine on $E_1$ and $E_2$, so the second-order part of $\Lc G$ vanishes
off $\Delta$, giving $\Lc G=\beta_1$ on $E_1$ and $\Lc G=\alpha\beta_2$ on
$E_2$. The singular part follows from \eqref{eq:D2G-measure} contracted with
$\tfrac12 a$, namely $\tfrac12\tr(a\,D^2G)=\tfrac{q}{2\sqrt{1+\alpha^2}}\sgD$.
Subtracting $\Lc G$ from $c+rG$ gives \eqref{eq:Gamma-ac}--\eqref{eq:Gamma-delta}.
\end{proof}

\subsection{Reflected It\^{o}--Tanaka formula}
\label{subsec:reflected-ito-tanaka}

Let $Y_t:=X_t^1-\alpha X_t^2$, so $G(X_t)=\alpha X_t^2+Y_t^+$. By the
one-dimensional Tanaka formula~\cite{revuz2013continuous,karatzas2014brownian}
$dY_t^+=\bone_{\{Y_t>0\}}\,dY_t+\tfrac12\,d\ell_t^0(Y)$, where $\ell^0(Y)$ is
the semimartingale local time of $Y$ at $0$ and $d\langle Y\rangle_t=q(X_t)\,dt$
with $q$ as in \eqref{eq:q-def}. Because $X$ is reflected,
$dY_t=\cdots+(\rho_1\!\cdot\!(1,-\alpha))\,dL_t^1+(\rho_2\!\cdot\!(1,-\alpha))\,dL_t^2$,
so the reflected It\^{o}--Tanaka formula for $G(X)$ contains coordinate-boundary
local-time terms.

On the open face $F_1=\{x_1=0,\ x_2>0\}$ one has $x_1=0<\alpha x_2$, so locally
$G=\alpha x_2$ and $\nabla G=(0,\alpha)$; hence
$\rho_1\!\cdot\!\nabla G=\alpha\theta_1$. On $F_2=\{x_2=0,\ x_1>0\}$ one has
$\alpha x_2=0<x_1$, so locally $G=x_1$ and $\nabla G=(1,0)$; hence
$\rho_2\!\cdot\!\nabla G=\theta_2$. The kink set $\Delta$ meets
$\partial\Rp$ only at the origin; by Assumption~\ref{ass:diffusion} the boundary
local times do not charge this corner, so the one-sided traces of $\nabla G$
above are relevant only $dL^i$-a.e.\ on the open faces.

\begin{lemma}[Reflected It\^{o}--Tanaka formula for $G$]
\label{lem:ito-tanaka-G}
For every $t\ge0$, after localization,
\begin{align}
\label{eq:ito-tanaka-G}
        e^{-rt}G(X_t)
        &=G(x)+\int_0^t e^{-rs}\big(\Lac G(X_s)-rG(X_s)\big)\,ds
        +\frac12\int_0^t e^{-rs}\,d\ell_s^0(Y)\nonumber\\
        &\quad+\alpha\theta_1\int_0^t e^{-rs}\,dL_s^1
        +\theta_2\int_0^t e^{-rs}\,dL_s^2+M_t^G,
\end{align}
where $M^G$ is a local martingale and
$\Lac G=\beta_1\bone_{E_1}+\alpha\beta_2\bone_{E_2}$.
\end{lemma}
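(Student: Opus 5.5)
We decompose $G(X_t)=\alpha X_t^2+Y_t^+$ with $Y_t=X^1_t-\alpha X^2_t$, treat the two summands separately, and then apply the product rule with $e^{-rt}$. Both $X^2$ and $Y$ are continuous semimartingales by \eqref{eq:intro-sde}:
\[
dX^2_t=\beta_2\,dt+(\sigma dW)_2+\theta_1\,dL^1_t+dL^2_t,
\]
\[
dY_t=(\beta_1-\alpha\beta_2)\,dt+(1,-\alpha)\sigma\,dW_t+(1-\alpha\theta_1)\,dL^1_t+(\theta_2-\alpha)\,dL^2_t .
\]
The quadratic variation is $d\langle Y\rangle_t=q(X_t)\,dt$, with $q$ from \eqref{eq:q-def}. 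Tanaka's formula (valid for any continuous semimartingale) then gives
\[
dY^+_t=\bone_{\{Y_t>0\}}\,dY_t+\tfrac12\,d\ell^0_t(Y).
\]
Combining this with $\alpha\,dX^2_t$ decomposes $dG(X_t)$ into a drift part, a martingale part and a boundary part, which we identify in turn.

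\textbf{Drift part.} The drift is $\alpha\beta_2+\bone_{\{Y>0\}}(\beta_1-\alpha\beta_2)$. This equals $\beta_1$ on $E_1$ and $\alpha\beta_2$ on $E_2$. On $\Delta$ it equals $\alpha\beta_2$, whereas $\Lac G$ is undefined there. To see that this does not matter, use the occupation times formula together with $q>0$ on compacts of $\Rpp$: the time spent on $\Delta$ is $\int\bone_{\{Y_s=0\}}ds=\int\bone_{\{Y_s=0\}}q(X_s)^{-1}d\langle Y\rangle_s=0$ whenever $X$ stays in $\Rpp$. On the faces the only point of $\Delta$ is the origin, and it has zero Lebesgue-time as well, since the time spent on the faces is null by local ellipticity. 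Hence the drift is $\Lac G(X_s)$ for $ds$-a.e. $s$.

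\textbf{Martingale part.} The martingale integrand is $\alpha(\sigma dW)_2+\bone_{\{Y>0\}}(1,-\alpha)\sigma\,dW$, that is, $\nabla G(X_s)\sigma(X_s)\,dW_s$ off $\Delta$. This is a local martingale.

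\textbf{Boundary terms.} The $dL^1$ coefficient is $\alpha\theta_1+\bone_{\{Y>0\}}(1-\alpha\theta_1)$. By \eqref{eq:reflection-support}, $L^1$ increases only when $X^1=0$. On the open face $F_1$ we have $Y=-\alpha X^2<0$, and the corner is not charged by Assumption~\ref{ass:diffusion}. Therefore the indicator vanishes $dL^1$-a.e. and the coefficient is $\alpha\theta_1$. Symmetrically, $L^2$ increases only where $X^2=0$, $X^1>0$, hence where $Y>0$. The coefficient there is $\alpha+(\theta_2-\alpha)=\theta_2$. These values match $\rho_i\cdot\nabla G$ as computed before the lemma.

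\textbf{Discounting.} Apply integration by parts, $d(e^{-rt}G(X_t))=e^{-rt}dG(X_t)-re^{-rt}G(X_t)\,dt$; no covariation term arises because $e^{-rt}$ has finite variation. This produces \eqref{eq:ito-tanaka-G} with
\[
M^G_t=\int_0^te^{-rs}\nabla G(X_s)\sigma(X_s)\,dW_s .
\]
Localize by $\tau_n=\inf\{t:|X_t|\ge n\}$, so that the integrands are bounded by linear growth.

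\textbf{Main obstacle.} The delicate step is the null-set bookkeeping in the drift and boundary parts. Specifically:
\begin{itemize}
\item The Lebesgue time spent on $\Delta$ must be zero, so that $\Lac G$ is well defined $ds$-a.e.
\item The indicator $\bone_{\{Y>0\}}$ must be identified $dL^i$-a.e. on the faces, which is where the non-charging of the corner is used.
\end{itemize}
For the first point, apply the occupation formula for $Y$ and localize to times when $X$ stays in a compact subset of $\Rpp$, where $q\ge q_0>0$. Then control excursions near the boundary by the zero-Lebesgue-time property of reflected diffusions on the faces.
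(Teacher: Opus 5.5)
Your proposal follows the same route as the paper's proof: write $G(X_t)=\alpha X_t^2+Y_t^+$, apply Tanaka's formula to $Y^+$, insert the reflected SDE, read off the coefficients $\alpha\theta_1$ and $\theta_2$ of $dL^1,dL^2$ on the open faces using that the corner is not charged, and discount. Your bookkeeping is correct and more careful than the paper's, which never checks that $\{Y_s=0\}$ is $ds$-null; one caveat is that Assumption~\ref{ass:diffusion} gives ellipticity only in $\Rpp$, so the claim that the Lebesgue time at the origin vanishes ``by local ellipticity'' is not justified as stated. That claim follows at once from your occupation-formula argument if $q(0)>0$, and otherwise needs a separate argument, a point the paper also passes over silently.
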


\begin{proof}
Write $G(X_t)=\alpha X_t^2+Y_t^+$, apply the semimartingale Tanaka formula to
$Y^+$, insert the reflected SDE for $X$, and multiply by $e^{-rt}$. The
$dt$-terms give $\Lac G-rG$; the kink gives $\tfrac12 d\ell^0(Y)$; the
reflection terms are $(\rho_1\!\cdot\!\nabla G)\,dL^1+(\rho_2\!\cdot\!\nabla G)\,dL^2
=\alpha\theta_1\,dL^1+\theta_2\,dL^2$ on the open faces; the remaining
stochastic integral is $M^G$.
\end{proof}

\begin{remark}[Two equivalent singular descriptions]
\label{rem:local-time-vs-surface}
The diagonal singularity may be represented either by the surface measure
$-\tfrac{q}{2\sqrt{1+\alpha^2}}\sgD$ or by the local-time functional
$-\tfrac12\,d\ell^0(Y)$. These describe the same mechanism: the coefficient $q$
is already encoded in $\ell^0(Y)$ through $d\langle Y\rangle_t=q(X_t)\,dt$, and
one must not multiply the local-time term again by $q$.
\end{remark}

\begin{remark}[When the boundary terms vanish]
\label{rem:normal-reflection-vanishes}
The boundary contributions in \eqref{eq:ito-tanaka-G} are
$\alpha\theta_1\,dL^1$ and $\theta_2\,dL^2$. For normal reflection
($\theta_1=\theta_2=0$) both vanish, since on each face the gradient of $G$ is
parallel to that face and the inward normal derivative is zero. Thus, for the
pair $(G,\,\text{reflection})$, the boundary part of the stopping measure is
identically zero under normal reflection and is nonzero precisely when at least
one of $\theta_1,\theta_2$ is nonzero (equivalently, whenever a reflection
vector has a nonzero component tangent to its face). The remainder of the paper
is stated for general $\theta_1,\theta_2$ and specializes to normal reflection
by setting them to zero.
\end{remark}

\section{Total stopping measure and killed resolvents}
\label{sec:killed-resolvent}

\subsection{The total reflected stopping measure}
\label{subsec:total-measure}

Let $A^\Delta_t:=\tfrac12\ell_t^0(Y)$ be the diagonal local-time functional,
and let
\begin{equation}
\label{eq:A-boundary}
        A_t^\partial:=\alpha\theta_1 L_t^1+\theta_2 L_t^2
        =\sum_{i=1}^2\int_0^t(\rho_i\!\cdot\!\nabla G)(X_s)\,dL_s^i
\end{equation}
be the signed boundary functional. Define the total signed additive functional
\begin{equation}
\label{eq:A-total}
        A_t^{\Gtot}:=\int_0^t\Gac(X_s)\,ds-A_t^\Delta-A_t^\partial,
\end{equation}
and let $\Gtot$ be the signed smooth measure whose additive functional is
\eqref{eq:A-total}. In measure notation
\begin{equation}
\label{eq:Gamma-total}
        \Gtot=\Gac\,dx+\GD+\Gbd,
\end{equation}
with $\GD$ as in \eqref{eq:Gamma-delta} and $\Gbd$ the signed boundary Revuz
measure of $-A^\partial$, i.e. the measure $-\alpha\theta_1$ times boundary
occupation measure of $F_1$ plus $-\theta_2$ times that of $F_2$.

Combining \eqref{eq:ito-tanaka-G} with $\Gac=c+rG-\Lac G$ (so that
$\Lac G-rG=c-\Gac$) and \eqref{eq:A-total} yields the bridge identity in the
clean form
\begin{equation}
\label{eq:bridge}
        e^{-rt}G(X_t)-\int_0^t e^{-rs}c(X_s)\,ds
        =G(x)-\int_0^t e^{-rs}\,dA_s^{\Gtot}+M_t^G,
\end{equation}
where the running cost $c$ is already contained in $\Gac$. This is the form
used throughout.

\begin{assumption}[Smooth-measure admissibility]
\label{ass:smooth-measure}
The positive and negative variations of $\Gtot$ are smooth measures for the
reflected diffusion, and for every relatively compact open $O\subset\Rp$,
\[
        \E_x\!\left[\int_0^{\tau_O}e^{-rs}\,d|A_s^{\Gtot}|\right]<\infty,
        \qquad x\in O,\ \ \tau_O:=\inf\{t\ge0:X_t\notin O\}.
\]
\end{assumption}

\subsection{Killed resolvents}
\label{subsec:killed-resolvents}

For open $O\subset\Rp$ put $\tau_{O^c}:=\inf\{t\ge0:X_t\notin O\}$. For a
signed smooth measure $\mu=\mu^+-\mu^-$ with functional
$A^\mu=A^{\mu^+}-A^{\mu^-}$ define the killed $r$-resolvent on $O$,
\begin{equation}
\label{eq:killed-resolvent}
        R_r^O\mu(x):=\E_x\!\left[\int_0^{\tau_{O^c}}e^{-rs}\,dA_s^\mu\right],
\end{equation}
whenever the expectation is defined. When $O=\Cc$ we write $R_r^{\Cc}\mu$ and,
by Proposition~\ref{prop:finiteness}, $\tau_{\Cc^c}=\tau_{\Dc}$ up to polar
exceptional sets. The killed resolvent solves
\begin{equation}
\label{eq:killed-pde}
        (r-\Lc)R_r^O\mu=\mu\ \text{in }O,\qquad R_r^O\mu=0\ \text{on }\partial O,
\end{equation}
in the weak potential-theoretic sense
\cite{blumenthal2007markov,fukushima2011dirichlet}, with the reflection
condition retained
on the parts of $\partial O$ lying in $\partial\Rp$.

\begin{lemma}[Resolvent identity in killed domains]
\label{lem:resolvent-identity}
Let $O\subset\Rp$ be open and $\mu$ a signed smooth measure satisfying the
integrability in \eqref{eq:killed-resolvent}. Then $u=R_r^O\mu$ satisfies
\[
        u(x)=\E_x\!\big[e^{-r(t\wedge\tau_{O^c})}u(X_{t\wedge\tau_{O^c}})\big]
        +\E_x\!\left[\int_0^{t\wedge\tau_{O^c}}e^{-rs}\,dA_s^\mu\right]
\]
whenever both terms are finite.
\end{lemma}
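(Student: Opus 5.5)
The identity is the strong Markov property of $X$ applied at $\sigma:=t\wedge\tau_{O^c}$, combined with additivity of $A^\mu$ under shifts. It is the standard decomposition of a potential into a ``before $\sigma$'' part and an ``after $\sigma$'' part. Write $\tau:=\tau_{O^c}$. I would first treat the case $\mu\ge0$, where every quantity lies in $[0,\infty]$ and no cancellation arises. Only then would I pass to $\mu=\mu^+-\mu^-$ by linearity, using the finiteness hypotheses to subtract.

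\textbf{Step 1 (splitting the integral).} For $\mu\ge0$ I would write
\[
\int_0^{\tau}e^{-rs}\,dA_s^\mu=\int_0^{\sigma}e^{-rs}\,dA_s^\mu+\bone_{\{\sigma<\tau\}}\int_\sigma^{\tau}e^{-rs}\,dA_s^\mu .
\]
On $\{\sigma=\tau\}$ the second term is zero. On $\{\sigma<\tau\}$ we have $\sigma=t$ and $X_t\in O$. Since $X_\tau\notin O$ anyway, I would use that $u$ vanishes off $O$ (the killed resolvent is $0$ when started outside $O$, where $\tau=0$). Then $e^{-r\sigma}u(X_\sigma)=\bone_{\{\sigma<\tau\}}e^{-rt}u(X_t)$, up to the polar-set caveat at $\partial O$ noted after \eqref{eq:killed-resolvent}.

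\textbf{Step 2 (shift and strong Markov).} On $\{\sigma<\tau\}=\{t<\tau\}$ I would use the terminal-time identity $\tau=t+\tau\circ\theta_t$ together with the perfect additivity $A^\mu_{t+s}-A^\mu_t=A^\mu_s\circ\theta_t$, which holds because the $A^\mu$ are positive continuous additive functionals attached to smooth measures. These give
\[
\int_t^{\tau}e^{-rs}\,dA_s^\mu=e^{-rt}\Big(\int_0^{\tau}e^{-rs}\,dA^\mu_s\Big)\circ\theta_t .
\]
Conditioning on $\Fc_t$, with $\{t<\tau\}\in\Fc_t$, and applying the Markov property then yields $\E_x[\bone_{\{t<\tau\}}e^{-rt}u(X_t)]$. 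Taking expectations in Step 1 gives the identity for $\mu\ge0$, with values in $[0,\infty]$.

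\textbf{Step 3 (signed case).} I would apply Step 2 to $\mu^\pm$ separately and subtract. The subtraction is legitimate whenever the $\mu^\pm$ pieces are finite. This holds under the integrability in \eqref{eq:killed-resolvent}, read as absolute integrability; the stated finiteness of both terms is what makes the difference well defined. Linearity of $u=R^O_r\mu^+-R^O_r\mu^-$ and of $A^\mu$ then gives the claim.

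The main obstacle is Step 2. It needs $\tau$ to be a genuine terminal time and $A^\mu$ to be a perfect additive functional, and the boundary part $\Gbd$ is built from the local times $L^i$. I would therefore check that the $L^i$, being functionals of the Skorokhod map of the Feller reflected process in Assumption~\ref{ass:diffusion}, are additive under shifts. I would also check that the exceptional sets in the Revuz correspondence and the hitting-time identification are polar, so that the identity holds for every $x$, and not merely quasi-everywhere.
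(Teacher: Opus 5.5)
Your proposal is correct and follows the same route as the paper. The paper's proof is the one-line version of yours: split $\int_0^{\tau_{O^c}}$ at $t\wedge\tau_{O^c}$, use additivity of $A^\mu$ under shifts, and apply the Markov property at the splitting time. Your added details are a faithful expansion of that argument: treating $\mu^{\pm}$ separately under absolute integrability, and using $u=0$ on $O^c$ so that only the simple Markov property at the deterministic time $t$ on $\{t<\tau_{O^c}\}$ is needed. The polar-set caveat you mention is unnecessary, because $\tau_{O^c}=0$ $\PP_y$-a.s.\ for every $y\notin O$, so $u$ vanishes identically off $O$.
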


\begin{proof}
Split the integral defining $R_r^O\mu$ at $t\wedge\tau_{O^c}$, use additivity
of $A^\mu$, and apply the strong Markov property to the shifted process.
\end{proof}

\subsection{Killed-resolvent representation of the value}
\label{subsec:killed-representation}

\begin{theorem}[Killed-resolvent representation]
\label{thm:killed-representation}
Assume Assumptions~\ref{ass:diffusion}--\ref{ass:smooth-measure}, that $V$ is
continuous, and that $\tau_{\Dc}$ is optimal for \eqref{eq:intro-value}.
Suppose
$\E_x[\int_0^{\tau_{\Dc}}e^{-rs}\,d|A_s^{\Gtot}|]<\infty$ for $x\in\Cc$ and
that the martingale in \eqref{eq:bridge} stopped at $\tau_{\Dc}$ is uniformly
integrable. Then, for every $x\in\Cc$,
\begin{equation}
\label{eq:V-killed}
        V(x)=G(x)-R_r^{\Cc}\Gtot(x),\qquad
        H(x)=-R_r^{\Cc}\Gtot(x).
\end{equation}
\end{theorem}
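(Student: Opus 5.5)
The plan is to evaluate the bridge identity \eqref{eq:bridge} at the optimal stopping time and take expectations. Fix $x\in\Cc$ and write $\tau:=\tau_{\Dc}$. Choose a localizing sequence $\tau_n\uparrow\infty$ for $M^G$ and apply \eqref{eq:bridge} at $t=\tau\wedge\tau_n\wedge n$. This gives
\[
        e^{-r(\tau\wedge\tau_n\wedge n)}G(X_{\tau\wedge\tau_n\wedge n})
        -\int_0^{\tau\wedge\tau_n\wedge n}e^{-rs}c(X_s)\,ds
        =G(x)-\int_0^{\tau\wedge\tau_n\wedge n}e^{-rs}\,dA_s^{\Gtot}
        +M^G_{\tau\wedge\tau_n\wedge n}.
\]
Since the stopped martingale $M^G_{\cdot\wedge\tau}$ is assumed uniformly integrable, the localizing sequence can be dropped. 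Taking expectations then yields
\[
        \E_x\Big[e^{-r(\tau\wedge n)}G(X_{\tau\wedge n})-\int_0^{\tau\wedge n}e^{-rs}c\,ds\Big]
        =G(x)-\E_x\Big[\int_0^{\tau\wedge n}e^{-rs}\,dA_s^{\Gtot}\Big].
\]

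Next I let $n\to\infty$ term by term.

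\textbf{Additive-functional term.} It is dominated by $\int_0^{\tau}e^{-rs}\,d|A^{\Gtot}_s|$, which is integrable by hypothesis. Dominated convergence therefore sends it to $R_r^{\Cc}\Gtot(x)$, using the definition \eqref{eq:killed-resolvent} together with the identification $\tau_{\Cc^c}=\tau_{\Dc}$ recorded after it.

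\textbf{Cost term.} It converges by monotone convergence, since $c\ge0$; the limit is finite by the bound following \eqref{eq:Psi-bound}.

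\textbf{Payoff term.} It is dominated by $K_\Psi\sup_{t\ge0}e^{-rt}\Psi(X_t)$, which is integrable by Assumption~\ref{ass:growth}. On $\{\tau<\infty\}$ the payoff term converges to $e^{-r\tau}G(X_\tau)$. On $\{\tau=\infty\}$ it tends to $0$: the convention for the payoff at $\tau=\infty$ is $0$, and $e^{-rt}G(X_t)\to0$ a.s. This last fact follows because $e^{-rt}\Psi(X_t)$ is a nonnegative process that is a supermartingale up to the drift $K_\Psi e^{-rt}$, and whose expectation decays like $e^{-(r-\lambda_\Psi)t}$ by \eqref{eq:Psi-bound}, so its a.s. limit is $0$ along the full time axis.

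The left side thus converges to the reward of $\tau_{\Dc}$, which equals $V(x)$ by the assumed optimality. This gives $V(x)=G(x)-R_r^{\Cc}\Gtot(x)$, and $H=V-G=-R_r^{\Cc}\Gtot$ on $\Cc$.

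The main obstacle is the passage $n\to\infty$ on $\{\tau=\infty\}$, and more precisely making sure that the payoff, the cost and the additive functional all converge consistently. If the supermartingale argument for $e^{-rt}G(X_t)\to0$ proves awkward, I would first show $\E_x[e^{-rn}\Psi(X_n)\bone_{\{\tau>n\}}]\to0$ from \eqref{eq:Psi-bound} and $r>\lambda_\Psi$. Only convergence of expectations is needed, so this suffices.

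A secondary point is that continuity of $V$ makes $\Dc$ closed, so $X_{\tau}\in\Dc$ on $\{\tau<\infty\}$ and the killed resolvent genuinely stops at $\partial\Cc$. Because of this, I do not need $V(X_\tau)=G(X_\tau)$ separately; the argument uses only the $G$-reward of $\tau$, which is exactly what the optimality hypothesis identifies with $V(x)$.
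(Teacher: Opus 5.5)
Your proposal is correct and follows the paper's proof. In both, you apply the bridge identity \eqref{eq:bridge} at the localized time $t\wedge\tau_{\Dc}$, use uniform integrability of the stopped martingale to get $\E_x[M^G]=0$, identify the additive-functional term with $R_r^{\Cc}\Gtot(x)$, and use optimality of $\tau_{\Dc}$. Your treatment of the limit $n\to\infty$ on $\{\tau_{\Dc}=\infty\}$ is more explicit than the paper's. Either the a.s.\ argument $e^{-rt}\Psi(X_t)\to0$ or the simpler bound $\E_x[e^{-rn}\Psi(X_n)]\to0$ fills a step the paper leaves implicit.
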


\begin{proof}
Fix $x\in\Cc$, $\tau=\tau_{\Dc}$. By optimality
$V(x)=\E_x[e^{-r\tau}G(X_\tau)-\int_0^\tau e^{-rs}c\,ds]$. Apply
\eqref{eq:bridge} at $t\wedge\tau$, localize, and let the localization parameter
tend to infinity; the integrability hypotheses give
$\E_x[M_{t\wedge\tau}^G]\to0$, whence
\[
        \E_x\!\Big[e^{-r\tau}G(X_\tau)-\int_0^\tau e^{-rs}c\,ds\Big]
        =G(x)-\E_x\!\Big[\int_0^\tau e^{-rs}\,dA_s^{\Gtot}\Big]
        =G(x)-R_r^{\Cc}\Gtot(x).
\]
Subtracting $G(x)$ gives the second identity.
\end{proof}

\begin{remark}[Sign and existence of an optimal time]
\label{rem:sign-existence}
Since $H=V-G\ge0$, \eqref{eq:V-killed} shows the killed potential
$R_r^{\Cc}\Gtot$ is nonpositive on $\Cc$, consistent with
$\Gtot=c+rG-\Lc G$ corrected by the singular terms. Optimality of $\tau_{\Dc}$
is not automatic; it requires $\Dc$ closed (guaranteed by continuity of $V$,
Proposition~\ref{prop:finiteness}) together with the standard regularity of the
stopping problem. We therefore state Theorem~\ref{thm:killed-representation}
conditionally and use it only as a representational counterpart to the
verification result of Section~\ref{sec:verification}.
\end{remark}

\subsection{Failure of the unrestricted reflected resolvent}
\label{subsec:unrestricted-fails}

Let $R_r^{\mathrm R}\mu(x):=\E_x[\int_0^\infty e^{-rs}\,dA_s^\mu]$ be the
unrestricted $r$-resolvent. A tempting replacement for \eqref{eq:V-killed} is
$V=G-R_r^{\mathrm R}(\Gtot\bone_{\Cc})$. Splitting at $\tau_{\Dc}$,
\begin{equation}
\label{eq:unrestricted-split}
        R_r^{\mathrm R}\big(\Gtot\bone_{\Cc}\big)(x)
        =\underbrace{\E_x\!\Big[\int_0^{\tau_{\Dc}}e^{-rs}\,dA_s^{\Gtot}\Big]}_{=\,R_r^{\Cc}\Gtot(x)}
        +\E_x\!\Big[\int_{\tau_{\Dc}}^\infty e^{-rs}\bone_{\Cc}(X_s)\,dA_s^{\Gtot}\Big].
\end{equation}

\begin{proposition}[Post-stopping occupation bias]
\label{prop:post-stopping-bias}
Assume the integrability of Assumption~\ref{ass:smooth-measure} and suppose that
both terms in \eqref{eq:unrestricted-split} are well defined. Then the
unrestricted representation $V=G-R_r^{\mathrm R}(\Gtot\bone_{\Cc})$ agrees with
the killed representation only when
\begin{equation}
\label{eq:no-post-stopping}
        \E_x\!\Big[\int_{\tau_{\Dc}}^\infty e^{-rs}\bone_{\Cc}(X_s)\,dA_s^{\Gtot}\Big]=0,
        \qquad x\in\Cc.
\end{equation}
In general this condition is not implied by reflection or by the stopping
structure, because the process is not absorbed on $\Dc$: a conservative
diffusion that enters $\Dc$ may subsequently return to $\Cc$.
\end{proposition}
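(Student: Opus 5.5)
The plan has two parts: first the algebraic equivalence, then the claim that \eqref{eq:no-post-stopping} is not automatic.

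\emph{Equivalence.} I start from the splitting \eqref{eq:unrestricted-split}. To derive it, write $\int_0^\infty=\int_0^{\tau_{\Dc}}+\int_{\tau_{\Dc}}^\infty$ in the definition of $R_r^{\mathrm R}(\Gtot\bone_{\Cc})$. On $[0,\tau_{\Dc})$ we have $X_s\in\Cc$, so the indicator equals one there. The measure $dA^{\Gtot}$ does not charge the single instant $\tau_{\Dc}$, because $A^{\Gtot}$ is continuous. Hence the first piece is exactly $R_r^{\Cc}\Gtot(x)$, using $\tau_{\Cc^c}=\tau_{\Dc}$ as noted after \eqref{eq:killed-resolvent}. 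The integrability in Assumption~\ref{ass:smooth-measure}, together with the standing hypothesis that both terms are well defined, lets me split the expectation of the sum without producing an $\infty-\infty$ form.

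Where the killed representation of Theorem~\ref{thm:killed-representation} holds, the two formulas for $V$ coincide at $x\in\Cc$ if and only if $R_r^{\mathrm R}(\Gtot\bone_{\Cc})(x)=R_r^{\Cc}\Gtot(x)$. By the splitting, this is the same as the vanishing of the post-stopping term, which is \eqref{eq:no-post-stopping}. On $\Dc$ both formulas trivially give $G$ when one reads the potentials as zero there. This gives the ``only when'' claim. Using the strong Markov property at $\tau_{\Dc}$, I would also rewrite the residual as $\E_x[e^{-r\tau_{\Dc}}\phi(X_{\tau_{\Dc}})]$ with $\phi:=R_r^{\mathrm R}(\Gtot\bone_{\Cc})$, evaluated on $\Dc$. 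This makes it explicit that the residual is a discounted boundary-harmonic extension of $\phi|_{\partial\Cc}$.

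\emph{Non-automatic nature of \eqref{eq:no-post-stopping}.} Here I need the process to reach $\Cc$ again with positive probability after $\tau_{\Dc}$. Local uniform ellipticity (Assumption~\ref{ass:diffusion}) gives a support-theorem argument, or alternatively a positivity argument for the transition density in $\Rpp$. From a point $y\in\partial\Cc$, it yields $\PP_y(X_s\in\Cc \text{ on a set of positive Lebesgue measure in }[0,1])>0$, since $X$ is conservative and is not absorbed on $\Dc$.

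\emph{Main obstacle.} The hard step is turning positive re-entry probability into a nonzero residual, because $\Gtot$ is signed and cancellations could occur. To avoid this I would exhibit the effect concretely rather than generically. Near a point of $\partial\Cc$ that lies off $\Delta$ and off the faces, $\Gtot$ reduces to its density $\Gac$. By \eqref{eq:V-killed}, $R_r^{\Cc}\Gtot\le0$ on $\Cc$, and $\Gac$ is continuous there. If $\Gac$ has a strict sign on a neighbourhood of that point inside $\Cc$ (for instance where $c+rG-\beta_1$ is bounded away from zero), then the post-stopping occupation in that neighbourhood has positive expectation by the re-entry estimate. The residual is therefore strictly signed for $x$ close to that point, by continuity of the hitting distribution. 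If this sign condition cannot be guaranteed in general, I would fall back on the weaker statement that the proposition actually asserts. That statement is that the condition is not implied by reflection or by the stopping structure alone, and the re-entry argument already shows the residual term is a genuinely nontrivial functional of the post-$\tau_{\Dc}$ path.
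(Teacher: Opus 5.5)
Your first part is the paper's proof: split at $\tau_{\Dc}$, use $\bone_{\Cc}(X_s)=1$ on $[0,\tau_{\Dc})$ and the continuity of $A^{\Gtot}$, and conclude that on $\Cc$ the two formulas agree iff \eqref{eq:no-post-stopping} holds. Your strong Markov rewrite of the residual as $\E_x[e^{-r\tau_{\Dc}}\phi(X_{\tau_{\Dc}})]$, with $\phi:=R_r^{\mathrm R}(\Gtot\bone_{\Cc})$, is correct and useful. You may not, however, ``read'' $\phi$ as zero on $\Dc$. Its values there are exactly what the residual averages, so that convention would make the residual vanish by fiat.

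The gap is in the second part. There you try to upgrade the paper's bare non-absorption remark to a strictly signed residual. A sign of $\Gac$ near one point $y\in\partial\Cc$ controls only a short initial time window. But $\phi(y)$ integrates $\Gtot$ along the entire post-$\tau_{\Dc}$ path over all of $\Cc$, including the negative diagonal mass $\GD$ on $\Delta\cap\Cc$. So ``the residual is therefore strictly signed'' does not follow.

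In fact the cancellation you worry about is exact in the regular case, so this step cannot be repaired. Suppose $V$ lies in the generalized It\^o class and $\partial\Cc$ is a Lipschitz curve meeting the faces only at isolated points.
\begin{itemize}
\item The supermartingale property of $e^{-rt}V(X_t)-\int_0^t e^{-rs}c(X_s)\,ds$ gives $(r-\Lc)V+c\ge0$ as a measure.
\item Since $H\ge0$ and $H=0$ on $\Dc$, the jump of $\partial_nH$ across $\partial\Cc$ is nonnegative. Hence the singular part of $(r-\Lc)H$ on $\partial\Cc$ is $\le0$.
\item Off $\Delta$ and the faces, $\Gtot$ does not charge $\partial\Cc$, so this singular part vanishes (smooth fit).
\item On $\Delta$ the same comparison, now with $\GD<0$, forces $\sgD(\Delta\cap\partial\Cc)=0$.
\end{itemize}
Hence $(r-\Lc)H=-\Gtot\bone_{\Cc}$, face terms included. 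It\^o--Tanaka for $H$, together with $\E_x[e^{-rt}H(X_t)]\to0$ (from \eqref{eq:Psi-bound} and $r>\lambda_\Psi$), then gives $H=-\phi$ on all of $\Rp$. Thus $\phi=0$ on $\Dc$, and the residual vanishes for every $x\in\Cc$, even though the process re-enters $\Cc$ with positive probability.

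The perpetual American put gives a one-dimensional check. Take $dX=rX\,dt+\sigma X\,dW$, $G=(K-x)^+$, free boundary $b<K$, and $\Gamma:=(r-\Lc)G$. Smooth fit gives $(r-\Lc)(V-G)=-\Gamma\bone_{(b,\infty)}$. Uniqueness of bounded solutions then yields $\phi=-H$, so $\phi(b)=0$, while $X$ returns to $(b,\infty)$ almost surely.

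So no argument of the form ``re-entry plus a local sign'' can produce a nonzero residual. A nonzero residual requires $(r-\Lc)H$ to charge $\partial\Cc$, i.e.\ a failure of smooth fit. Your fallback sentence is the same heuristic the paper offers, not a proof, and the computation above shows that non-absorption is not the operative mechanism.
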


\begin{proof}
The decomposition \eqref{eq:unrestricted-split} is the splitting of the
unrestricted functional at $\tau_{\Dc}$; the first term is $R_r^{\Cc}\Gtot$ by
Theorem~\ref{thm:killed-representation}. Equality of the two representations is
thus equivalent to the vanishing of the second term. The phenomenon is one of
non-absorption rather than of reflection per se: without an absorption
or invariance hypothesis on $\Dc$, the post-stopping occupation of $\Cc$ does
not vanish.
\end{proof}

\section{Free-boundary trace formulation}
\label{sec:free-boundary-trace}

The representation \eqref{eq:V-killed}, together with $V=G$ on $\Dc$, forces the
continuation-side trace of the killed potential to vanish:
\begin{equation}
\label{eq:trace}
        \Tr_{\partial\Cc}R_r^{\Cc}\Gtot=0,
        \qquad\text{equivalently}\qquad
        \Tr_{\partial\Cc}\big(G-R_r^{\Cc}\Gtot\big)=G.
\end{equation}
This is the nonlocal free-boundary condition for the reflected measure-valued
obstacle problem.

\begin{remark}[Meaning of the trace]
\label{rem:trace-meaning}
The trace in \eqref{eq:trace} is the continuation-side quasi-continuous trace of
the killed potential. When the killed potential has a continuous extension to a
regular boundary point, it reduces to the ordinary one-sided limit from $\Cc$.
For locally Lipschitz epigraph boundaries the classical expression
\begin{equation}
\label{eq:graph-trace}
        \lim_{\varepsilon\downarrow0}R_r^{\Cc}\Gtot\big(x_1,\bb(x_1)-\varepsilon\big)=0
\end{equation}
is used only at points where this one-sided trace exists; in general the trace
is taken in the quasi-continuous or fine-potential-theoretic sense, according
to the regularity available for $\bb$ and for the killed potential.
\end{remark}

\begin{remark}[Trace versus smooth fit]
\label{rem:trace-vs-smooth-fit}
At regular boundary points away from $\Delta$, additional regularity may turn
the trace condition into a more classical matching condition; we do not rely on
such a reduction.
The trace formulation is the appropriate global substitute precisely because of
the two singular mechanisms: the diagonal kink of $G$, where smooth fit
genuinely breaks down, and the boundary local-time terms generated by oblique
reflection. The condition \eqref{eq:trace} is a contact condition for the
killed potential, not a pointwise $C^1$- or $C^2$-matching condition.
\end{remark}

\section{Verification of epigraph candidates}
\label{sec:verification}

\subsection{Conditional epigraph geometry}
\label{subsec:epigraph}

In two-dimensional stopping problems the stopping set need not be a graph. We
impose graph structure through monotonicity of the stopping advantage.

\begin{assumption}[Vertical monotonicity]
\label{ass:vertical-monotonicity}
For every $x_1\ge0$ the map $x_2\mapsto H(x_1,x_2)$ is nonincreasing on
$[0,\infty)$.
\end{assumption}

\begin{theorem}[Conditional epigraph representation]
\label{thm:epigraph}
Assume $V$ continuous and Assumption~\ref{ass:vertical-monotonicity}. Define
\begin{equation}
\label{eq:b-def}
        \bb(x_1):=\inf\{x_2\ge0:H(x_1,x_2)=0\}\in[0,\infty],
        \qquad \inf\varnothing:=+\infty .
\end{equation}
Then $\bb:[0,\infty)\to[0,\infty]$ is lower semicontinuous and
\begin{equation}
\label{eq:DC-epigraph}
        \Dc=\{(x_1,x_2):x_2\ge\bb(x_1)\},\qquad
        \Cc=\{(x_1,x_2):0\le x_2<\bb(x_1)\}.
\end{equation}
\end{theorem}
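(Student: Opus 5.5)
The plan is a direct argument combining the sign of $H$, vertical monotonicity, and closedness of $\Dc$; no stochastic input is needed beyond Proposition~\ref{prop:finiteness}.

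\emph{Epigraph description.} Since $V\ge G$ by \eqref{eq:V-dominates-G}, we have $H\ge0$ everywhere, so $\Dc=\{H=0\}=\{H\le 0\}$. By Assumption~\ref{ass:vertical-monotonicity}, for each fixed $x_1$ the section $S(x_1):=\{x_2\ge0:H(x_1,x_2)=0\}$ is an up-set in $[0,\infty)$: if $H(x_1,y)=0$ and $y'\ge y$, then $0\le H(x_1,y')\le H(x_1,y)=0$.

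Since $V$ is continuous, $H=V-G$ is continuous, so $S(x_1)$ is closed. A closed up-set of $[0,\infty)$ is either empty or of the form $[\bb(x_1),\infty)$ with $\bb(x_1)=\min S(x_1)$. This gives $S(x_1)=\{x_2\ge\bb(x_1)\}$, with the convention $\bb=+\infty$ when the section is empty. Consequently $\Dc=\{x_2\ge\bb(x_1)\}$, and taking complements in $\Rp$ (using $H\ge0$) yields $\Cc=\{0\le x_2<\bb(x_1)\}$. This proves \eqref{eq:DC-epigraph}.

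\emph{Lower semicontinuity of $\bb$.} This is the only step that needs a little care, because of the value $+\infty$. Take $x_1^n\to x_1$ and put $\ell:=\liminf_n\bb(x_1^n)$. We must show $\bb(x_1)\le \ell$. If $\ell=\infty$ there is nothing to prove. Otherwise pass to a subsequence along which $\bb(x_1^n)\to\ell$ with each $\bb(x_1^n)$ finite. By the epigraph description and closedness of the section, $(x_1^n,\bb(x_1^n))\in\Dc$. Since $\Dc$ is closed (it is the zero set of the continuous function $H$, as in Proposition~\ref{prop:finiteness}), the limit point $(x_1,\ell)$ lies in $\Dc$. Hence $H(x_1,\ell)=0$, so $\bb(x_1)\le\ell$ by \eqref{eq:b-def}.

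Equivalently, the sublevel sets $\{\bb\le c\}$ are closed: $\{\bb\le c\}$ is the projection onto the first coordinate of the compact-in-$x_2$ closed set $\Dc\cap\{x_2\le c\}$, and projecting along a bounded direction preserves closedness. I would record this as the alternative formulation.

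The main subtlety is making sure that closedness of the sections, and not merely monotonicity, is what makes the infimum in \eqref{eq:b-def} attained. Without it one would only obtain $\{x_2>\bb\}\subset\Dc\subset\{x_2\ge\bb\}$. Continuity of $V$ resolves this, and it is the same hypothesis that drives the lower semicontinuity step.
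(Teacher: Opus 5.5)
Your proof is correct and follows the same route as the paper. Nonnegativity of $H$ plus vertical monotonicity makes each vertical section an up-set, and continuity of $H$ makes it closed, which gives the epigraph form and the closedness of $\Dc$; your sequential (or projection) argument for lower semicontinuity of $\bb$ supplies the detail that the paper's proof only asserts in one line from the closedness of $\Dc$.
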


\begin{proof}
Fix $x_1$. Since $H\ge0$ and $x_2\mapsto H(x_1,x_2)$ is nonincreasing,
$H(x_1,x_2)=0$ implies $H(x_1,y_2)=0$ for all $y_2\ge x_2$; hence each vertical
stopping section is an interval $[\bb(x_1),\infty)$ (empty if
$\bb(x_1)=+\infty$). Continuity of $V,G$ gives continuity of $H$, so $\Dc$ is
closed and $\bb$ is lower semicontinuous, and $H(x_1,\bb(x_1))=0$ when
$\bb(x_1)<\infty$. This yields \eqref{eq:DC-epigraph}.
\end{proof}

\begin{remark}[Conditional nature]
\label{rem:conditional}
Theorem~\ref{thm:epigraph} is not a consequence of convexity of $G$ or of
reflection: it follows from monotonicity of $H=V-G$, which in applications must
be established from the model primitives or imposed as part of a candidate
class. Allowing $\bb(x_1)=+\infty$ avoids an artificial nonempty-section
hypothesis.
\end{remark}

\subsection{Candidate epigraphs and the structural constraint}
\label{subsec:candidates}

Let $\bb:[0,\infty)\to[0,\infty]$ be locally Lipschitz on $\{\bb<\infty\}$, and
set
\begin{equation}
\label{eq:Db-Cb}
        \Dc_\bb:=\{x_2\ge\bb(x_1)\},\qquad \Cc_\bb:=\Rp\setminus\Dc_\bb,
        \qquad \tau_\bb:=\inf\{t\ge0:X_t\in\Dc_\bb\}.
\end{equation}
Define the killed potential and the candidate value
\begin{equation}
\label{eq:Pb-Ub}
        P_\bb(x):=R_r^{\Cc_\bb}\Gtot(x)
        =\E_x\!\Big[\int_0^{\tau_\bb}e^{-rs}\,dA_s^{\Gtot}\Big],
        \qquad
        U_\bb:=\begin{cases}G-P_\bb&\text{on }\Cc_\bb,\\[2pt]G&\text{on }\Dc_\bb.\end{cases}
\end{equation}

The following proposition makes explicit a constraint that the
superharmonicity requirement of Definition~\ref{def:admissible} below imposes on
the geometry; it should be read as part of the structural content of
admissibility rather than as an independently verifiable condition.

\begin{proposition}[Diagonal avoidance by the stopping interior]
\label{prop:diagonal-constraint}
Suppose $U_\bb=G$ on $\Dc_\bb$ and the measure-superharmonicity condition
$(r-\Lc)U_\bb+c\ge0$ holds on $\Rp$. Then
\[
        \Delta\cap\operatorname{int}(\Dc_\bb)=\varnothing.
\]
Consequently, for every $x_1$ with $\bb(x_1)<\infty$,
\begin{equation}
\label{eq:b-above-diagonal}
        \bb(x_1)\ge \frac{x_1}{\alpha}.
\end{equation}
\end{proposition}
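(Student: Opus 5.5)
Suppose, for contradiction, that some point $x^0\in\Delta$ lies in $\operatorname{int}(\Dc_\bb)$, and fix an open ball $B\subset\operatorname{int}(\Dc_\bb)$ centred at $x^0$. Since $\Delta$ meets $\partial\Rp$ only at the origin, and the origin is not interior to any epigraph $\{x_2\ge\bb(x_1)\}$ inside $\Rp$ in the relevant sense (a ball around $0$ leaves $\Rp$), we may take $x^0\ne0$. Shrinking $B$, we then have $B\subset\Rpp$ with $B\cap\partial\Rp=\varnothing$. On $B$ we have $U_\bb=G$, so as distributions on $B$,
\[
        (r-\Lc)U_\bb+c=(r-\Lc)G+c=\Gamma=\Gac\,dx+\GD
\]
by Proposition~\ref{prop:distributional-gain}. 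No boundary terms $\Gbd$ enter, because $B$ avoids the faces.

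The measure-superharmonicity hypothesis says this restriction is a nonnegative measure on $B$. Its singular part with respect to Lebesgue measure must then be nonnegative, since the Lebesgue decomposition of a nonnegative measure has nonnegative parts. That singular part is $\GD|_B=-\tfrac{q}{2\sqrt{1+\alpha^2}}\sgD|_B$. Because $q>0$ on compacts of $\Rpp$ by \eqref{eq:q-def}, which uses local uniform ellipticity from Assumption~\ref{ass:diffusion}, and $\sgD(B\cap\Delta)>0$ for a ball centred on $\Delta$, this part is a strictly negative measure. This is a contradiction, so $\Delta\cap\operatorname{int}(\Dc_\bb)=\varnothing$.

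For \eqref{eq:b-above-diagonal}, suppose $\bb(x_1)<\infty$ and $\bb(x_1)<x_1/\alpha$ for some $x_1$. Then $x_1>0$, and the diagonal point $(x_1,x_1/\alpha)$ lies strictly above the graph at abscissa $x_1$. Since $\bb$ is locally Lipschitz, hence continuous, on the set $\{\bb<\infty\}$, there is a neighbourhood of $x_1$ on which $\bb<\infty$ and $\bb(y_1)<x_1/\alpha-\eta$ for some $\eta>0$. Consequently a small ball around $(x_1,x_1/\alpha)$ is contained in $\{x_2>\bb(x_1)\}\subset\operatorname{int}(\Dc_\bb)$. This contradicts the first part.

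The main obstacle is justifying that the distribution $(r-\Lc)U_\bb+c$, restricted to an open set where $U_\bb=G$, coincides with $\Gamma$ computed from $G$ alone. It is also delicate to check that openness of $\{\bb<\infty\}$ holds near $x_1$, for which lower semicontinuity alone is insufficient. The first point is locality of distributions, which is immediate. For the second, I would use that $\{\bb<\infty\}$ carries the Lipschitz assumption and take it to be relatively open. Failing that, I would argue directly that $(x_1,x_1/\alpha)$ is interior using only the points where $\bb$ is finite and continuous.
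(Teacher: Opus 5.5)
Your argument is correct and is essentially the paper's proof. On the stopping interior, $U_\bb=G$ forces $(r-\Lc)U_\bb+c=\Gac\,dx+\GD$, and the strictly negative diagonal singular part of this measure contradicts measure-superharmonicity; the bound $\bb(x_1)\ge x_1/\alpha$ is then read off at the diagonal point $(x_1,x_1/\alpha)$.

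The paper simply asserts that this point is interior iff $x_1/\alpha>\bb(x_1)$. That assertion tacitly uses exactly the openness of $\{\bb<\infty\}$ near $x_1$ that you flag, so your primary fix is the right one. Your fallback cannot work, however: if $\bb=+\infty$ at points accumulating at $x_1$, then $(x_1,x_1/\alpha)$ is not an interior point of $\Dc_\bb$ at all, so the openness hypothesis is genuinely needed.
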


\begin{proof}
On the interior of $\Dc_\bb$ one has $U_\bb=G$, hence
\[
        (r-\Lc)U_\bb+c=c+rG-\Lc G=\Gamma=\Gac\,dx+\GD,
\]
whose singular part is $\GD=-\tfrac{q}{2\sqrt{1+\alpha^2}}\sgD$, a strictly
negative measure on $\Delta$ by \eqref{eq:q-def}. If $\Delta$ met the interior
of $\Dc_\bb$, the measure $(r-\Lc)U_\bb+c$ would carry strictly negative
singular mass there, contradicting $(r-\Lc)U_\bb+c\ge0$. Hence
$\Delta\cap\operatorname{int}\Dc_\bb=\varnothing$. For the epigraph
$\Dc_\bb=\{x_2\ge\bb(x_1)\}$ the point $(x_1,x_1/\alpha)\in\Delta$ lies in the
interior of $\Dc_\bb$ iff $x_1/\alpha>\bb(x_1)$; excluding this gives
\eqref{eq:b-above-diagonal}.
\end{proof}

\begin{remark}[Interpretation]
\label{rem:diagonal-interpretation}
Constraint \eqref{eq:b-above-diagonal} says the free boundary lies on or above
the diagonal. The sign of the local-time term $+\tfrac12\,d\ell^0(Y)$ in
\eqref{eq:ito-tanaka-G} is consistent with the fact that, under
measure-superharmonicity, the diagonal cannot lie in the stopping interior. The
constraint is not an independent hypothesis on $\bb$ that can be checked before
fixing the geometry; it is forced by superharmonicity, which is why we record
it explicitly.
\end{remark}

\subsection{Admissible candidates and the verification theorem}
\label{subsec:verification-theorem}

\begin{definition}[Admissible epigraph candidate]
\label{def:admissible}
A locally Lipschitz epigraph boundary $\bb$ is admissible if, with
$P_\bb,U_\bb$ as in \eqref{eq:Pb-Ub}:
\begin{enumerate}[label=\textup{(A\arabic*)}]
\item\label{adm:finite} $P_\bb$ is finite q.e.\ in $\Cc_\bb$ and admits a
        quasi-continuous modification;
\item\label{adm:trace} $\Tr_{\partial\Cc_\bb}P_\bb=0$
        in the sense of Remark~\ref{rem:trace-meaning};
\item\label{adm:cont} $U_\bb$ extends continuously to $\Rp$;
\item\label{adm:contact} $U_\bb=G$ on $\Dc_\bb$ and $U_\bb>G$ on $\Cc_\bb$
        (contact and strict continuation);
\item\label{adm:major} $U_\bb\ge G$ on $\Rp$ (majorization);
\item\label{adm:neumann} $\rho_i\!\cdot\!\nabla U_\bb=0$ on $F_i$, $i=1,2$,
        in the trace sense (reflected Neumann compatibility);
\item\label{adm:growth} $|U_\bb(x)|\le C(1+\Psi(x))$ on $\Rp$, with $\Psi$ from
        Assumption~\ref{ass:growth};
\item\label{adm:ito} $U_\bb$ belongs locally to the generalized It\^{o} class,
        its distributional second derivatives having signed smooth-measure parts
        with locally integrable functionals;
\item\label{adm:super} $(r-\Lc)U_\bb+c\ge0$ on $\Rp$ as a signed smooth
        measure (measure-superharmonicity).
\end{enumerate}
\end{definition}

By Proposition~\ref{prop:diagonal-constraint}, any admissible $\bb$ satisfies
$\bb(x_1)\ge x_1/\alpha$.

Inside $\Cc_\bb$ the killed potential satisfies $(r-\Lc)P_\bb=\Gtot$ weakly, and
since $U_\bb=G-P_\bb$ with $\Gtot=c+rG-\Lc G$ (corrected by the singular and
boundary terms), the candidate value satisfies the continuation equation
\begin{equation}
\label{eq:candidate-continuation}
        \Lc U_\bb-rU_\bb-c=0\qquad\text{in }\Cc_\bb
\end{equation}
in the weak measure sense.

\begin{lemma}[Martingale property up to the candidate boundary]
\label{lem:candidate-martingale}
If $\bb$ is admissible, then
\[
        M_t^\bb:=e^{-r(t\wedge\tau_\bb)}U_\bb(X_{t\wedge\tau_\bb})
        -\int_0^{t\wedge\tau_\bb}e^{-rs}c(X_s)\,ds
\]
is a local martingale, and a true martingale after localization and passage to
the limit under \ref{adm:growth} and Assumption~\ref{ass:growth}.
\end{lemma}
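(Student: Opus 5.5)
The plan is to apply the generalized reflected It\^{o} formula to $e^{-rt}U_\bb(X_t)$ on $[0,t\wedge\tau_\bb]$. Condition \ref{adm:ito} allows this: $U_\bb$ is locally in the generalized It\^{o} class and its distributional second derivatives have signed smooth-measure parts with locally integrable additive functionals. Localize with $\tau_n:=\inf\{t:|X_t|\ge n\}$. The formula then gives, for $s\le t\wedge\tau_\bb\wedge\tau_n$,
\[
e^{-rs}U_\bb(X_s)-\int_0^{s}e^{-ru}c(X_u)\,du
=U_\bb(x)-\int_0^{s}e^{-ru}\,dA_u^{\nu}
+\sum_{i=1}^2\int_0^{s}e^{-ru}(\rho_i\!\cdot\!\nabla U_\bb)(X_u)\,dL_u^i+N_s ,
\]
where $\nu:=(r-\Lc)U_\bb+c$ is the signed smooth measure, $A^\nu$ is its additive functional, and $N$ is a local martingale. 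Two terms then have to be removed. First, the boundary term vanishes by the reflected Neumann compatibility \ref{adm:neumann}, because $L^i$ charges only $F_i$ and, by Assumption~\ref{ass:diffusion}, not the corner. Second, I must show $A^\nu$ does not charge $[0,\tau_\bb)$, i.e.\ that $\nu$ restricted to $\Cc_\bb$ vanishes.

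For the second point I would argue directly from the construction rather than quote \eqref{eq:candidate-continuation}. On $\Cc_\bb$ we have $U_\bb=G-P_\bb$. The bridge identity \eqref{eq:bridge} expresses $G$ through $A^{\Gtot}$. Lemma~\ref{lem:resolvent-identity} with $O=\Cc_\bb$ shows that
\[
e^{-r(t\wedge\tau_\bb)}P_\bb(X_{t\wedge\tau_\bb})+\int_0^{t\wedge\tau_\bb}e^{-rs}\,dA_s^{\Gtot}
\]
is a martingale (Markov property plus additivity), with terminal value $0$ at $\tau_\bb$ by the trace condition \ref{adm:trace}. Subtracting the two decompositions cancels the $dA^{\Gtot}$ terms, including the diagonal and boundary pieces. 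What remains is that $e^{-r(s)}U_\bb(X_s)-\int_0^s e^{-ru}c\,du$, stopped at $\tau_\bb$, is $G(x)-P_\bb(x)=U_\bb(x)$ plus a local martingale. This also identifies $A^\nu$ as zero before $\tau_\bb$, since a continuous finite-variation local martingale is constant. Quasi-continuity \ref{adm:finite} is used so that $P_\bb(X)$ is a.s.\ continuous along paths, and continuity \ref{adm:cont} ensures $U_\bb(X_{\tau_\bb})=G(X_{\tau_\bb})$ at the hitting time.

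For the true-martingale claim, I would bound the process by \ref{adm:growth}:
\[
|M^\bb_t|\le C\Big(1+\sup_s e^{-rs}\Psi(X_s)\Big)+\int_0^\infty e^{-rs}c(X_s)\,ds .
\]
Assumption~\ref{ass:growth} makes the supremum integrable, and $c\le K_\Psi\Psi$ together with \eqref{eq:Psi-bound} makes the cost integral integrable. Hence the localized processes $M^\bb_{t\wedge\tau_n}$ are dominated by an integrable variable, so the local martingale is of class (D) and is a true martingale.

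I expect the main obstacle to be rigor at $\partial\Cc_\bb$. The trace condition only holds quasi-everywhere or in a fine sense, so it has to be converted into pathwise continuity of $P_\bb(X_s)$ as $s\uparrow\tau_\bb$. I would handle this with the standard fact that quasi-continuous functions composed with the process are a.s.\ right-continuous with left limits, together with fine regularity of boundary points of a Lipschitz epigraph. The latter gives $\tau_\bb=\tau_{\overline{\Dc_\bb}}$ a.s.
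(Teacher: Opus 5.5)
Your proof is correct. Its key step takes a different route from the paper's.

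\textbf{The two routes.} The paper applies the generalized reflected It\^{o} formula to $U_\bb$. It drops the face terms by \ref{adm:neumann} and drops all interior finite-variation terms by quoting \eqref{eq:candidate-continuation}, which it obtains from the weak equation $(r-\Lc)P_\bb=\Gtot$ in $\Cc_\bb$. Uniform integrability is credited to the Lyapunov estimate. You instead combine three ingredients: $U_\bb=G-P_\bb$, the bridge identity \eqref{eq:bridge}, and the strong-Markov martingale $Z_t=e^{-r(t\wedge\tau_\bb)}P_\bb(X_{t\wedge\tau_\bb})+\int_0^{t\wedge\tau_\bb}e^{-rs}\,dA_s^{\Gtot}$. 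Together these give $M_t^\bb=U_\bb(x)+M^G_{t\wedge\tau_\bb}-(Z_t-Z_0)$. The absolutely continuous, diagonal and oblique-boundary pieces cancel identically, because all of them are built into $\Gtot$.

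\textbf{What each route buys.} Your route makes the lemma independent of \ref{adm:neumann}, \ref{adm:ito} and \eqref{eq:candidate-continuation}. The It\^{o}-formula half of your proposal is then needed only for the by-product that $(r-\Lc)U_\bb+c$ does not charge $\Cc_\bb$; that is, it proves \eqref{eq:candidate-continuation} instead of assuming it. Your explicit dominating variable $C\bigl(1+\sup_s e^{-rs}\Psi(X_s)\bigr)+\int_0^\infty e^{-rs}c(X_s)\,ds$ supplies the class (D) property that the paper leaves implicit. The paper's route is shorter. It also reuses the It\^{o} computation from the supermartingale half of Theorem~\ref{thm:verification}, where no killed-potential representation is available along an arbitrary $\tau$.

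\textbf{Two adjustments.} First, the boundary difficulty you anticipate does not arise. On $\Dc_\bb$ one has $\tau_\bb=0$, so $P_\bb=0$ there by definition. Hence $P_\bb=G-U_\bb$ on all of $\Rp$, and \ref{adm:cont} makes $P_\bb(X)$ continuous up to and including $\tau_\bb$. This already gives the zero terminal value, with no appeal to \ref{adm:trace} or to fine regularity. Second, the martingale property of $Z$ is the conditional (strong Markov) version of Lemma~\ref{lem:resolvent-identity}, which as stated is only an identity of expectations. It requires $\E_x\bigl[\int_0^{\tau_\bb}e^{-rs}\,d|A_s^{\Gtot}|\bigr]<\infty$, an integrability hypothesis that is only implicit in \ref{adm:finite}.
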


\begin{proof}
Apply the generalized reflected It\^{o} formula \eqref{eq:reflected-ito} to
$U_\bb(X_{t\wedge\tau_\bb})$. By \ref{adm:neumann} the boundary terms
$\sum_i\int(\rho_i\!\cdot\!\nabla U_\bb)\,dL^i$ vanish, and by
\eqref{eq:candidate-continuation} the interior drift and all smooth-measure
finite-variation terms before $\tau_\bb$ cancel against
$-\int e^{-rs}c\,ds$. The remaining term is a local martingale; the Lyapunov
estimate gives uniform integrability after the standard localization.
\end{proof}

\begin{theorem}[Verification of an epigraph candidate]
\label{thm:verification}
Let $\bb$ be an admissible epigraph candidate in the sense of
Definition~\ref{def:admissible}, and $\tau_\bb$ as in \eqref{eq:Db-Cb}. Then
\[
        U_\bb=V\quad\text{on }\Rp,
\]
$\tau_\bb$ is optimal for \eqref{eq:intro-value}, and
\[
        \Dc_\bb=\{V=G\},\qquad \Cc_\bb=\{V>G\}.
\]
\end{theorem}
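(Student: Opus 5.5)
The proof has two halves: first $U_\bb\ge V$ via superharmonicity, then $U_\bb\le V$ via the martingale property up to $\tau_\bb$. The set identities then follow from contact and strict continuation \ref{adm:contact}.

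\emph{Step 1 (upper bound).} Fix $x\in\Rp$ and an arbitrary $\tau\in\Tc$. By \ref{adm:ito}, apply the generalized reflected It\^{o} formula \eqref{eq:reflected-ito} to $e^{-rt}U_\bb(X_t)$ on $[0,t\wedge\tau\wedge\sigma_n]$, where $\sigma_n$ is the exit time from a large ball. By \ref{adm:neumann} the boundary terms $\int(\rho_i\cdot\nabla U_\bb)\,dL^i$ vanish. By \ref{adm:super} the remaining finite-variation part, after subtracting $\int e^{-rs}c\,ds$, is $-\int e^{-rs}\,dA^{\nu}_s$ with $\nu=(r-\Lc)U_\bb+c\ge0$ a positive smooth measure, so its additive functional is nondecreasing. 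Taking expectations kills the local martingale (after localization) and gives
\[
U_\bb(x)\ge \E_x\Big[e^{-r(\tau\wedge\sigma_n\wedge t)}U_\bb(X_{\tau\wedge\sigma_n\wedge t})-\int_0^{\tau\wedge\sigma_n\wedge t}e^{-rs}c(X_s)\,ds\Big].
\]
We then let $n,t\to\infty$. The growth bound \ref{adm:growth} together with the integrability $\E_x[\sup_t e^{-rt}\Psi(X_t)]<\infty$ of Assumption~\ref{ass:growth} gives dominated convergence for the payoff term, and monotone convergence handles the cost term since $c\ge0$. Continuity \ref{adm:cont} lets us pass to the limit along paths, and on $\{\tau=\infty\}$ the discounted term tends to $0$ because $e^{-rt}\Psi(X_t)\to0$ a.s.; this last fact follows from \eqref{eq:Psi-bound} and $r>\lambda_\Psi$ via a Borel--Cantelli/supermartingale argument. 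Majorization \ref{adm:major} replaces $U_\bb$ by $G$ inside the expectation. Taking the supremum over $\tau$ yields $U_\bb\ge V$.

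\emph{Step 2 (lower bound).} By Lemma~\ref{lem:candidate-martingale}, $M^\bb$ is a martingale after localization. Hence $U_\bb(x)=\E_x[e^{-r(t\wedge\tau_\bb)}U_\bb(X_{t\wedge\tau_\bb})-\int_0^{t\wedge\tau_\bb}e^{-rs}c\,ds]$. Letting $t\to\infty$ with the same domination as in Step~1, and using that $X_{\tau_\bb}\in\Dc_\bb$ on $\{\tau_\bb<\infty\}$, we have $U_\bb(X_{\tau_\bb})=G(X_{\tau_\bb})$ by \ref{adm:contact}. Here $\Dc_\bb$ is closed because $\bb$ is lower semicontinuous (locally Lipschitz on $\{\bb<\infty\}$), and paths are continuous. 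This shows that $U_\bb(x)$ equals the reward of $\tau_\bb$, which is at most $V(x)$. Combining with Step~1 gives $U_\bb=V$ and optimality of $\tau_\bb$. Finally $\{V=G\}=\{U_\bb=G\}=\Dc_\bb$ and $\{V>G\}=\Cc_\bb$ by \ref{adm:contact}.

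\emph{Main obstacle.} The delicate point is Step~1: justifying the generalized It\^{o} formula for $U_\bb$ across the free boundary $\partial\Cc_\bb$ and across $\Delta$, with the singular parts of $(r-\Lc)U_\bb$ correctly represented by the positive functional $A^\nu$. I would argue this via Revuz correspondence using \ref{adm:ito} and \ref{adm:super}. The trace condition \ref{adm:trace} and continuity \ref{adm:cont} ensure that the free boundary carries no hidden negative singular mass from a jump of $U_\bb$: since $P_\bb$ has zero trace, $U_\bb$ is continuous across $\partial\Cc_\bb$, and any gradient jump there is a positive measure already accounted for by \ref{adm:super}. A secondary technical point is the corner and exceptional polar sets, which are handled by Assumption~\ref{ass:diffusion} (no local time at $0$) and by quasi-continuity \ref{adm:finite}.
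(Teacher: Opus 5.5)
Your proposal is correct and follows the paper's proof essentially step for step. The paper gets $U_\bb\ge V$ from the supermartingale bound (measure-superharmonicity, Neumann compatibility, growth, majorization) and $U_\bb\le V$ plus optimality from the martingale property up to $\tau_\bb$ and contact; you do the same, and your treatment of the limit $t\to\infty$ and of the event $\{\tau=\infty\}$ fills in details the paper leaves implicit.

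One small fix: local Lipschitz continuity of $\bb$ on $\{\bb<\infty\}$ does not by itself make $\bb$ lower semicontinuous, since $\bb$ can jump to $+\infty$. Closedness of $\Dc_\bb$ follows anyway from (A3)--(A4), because $\Dc_\bb=\{U_\bb-G=0\}$ is the zero set of a continuous function.
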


\begin{proof}
\emph{$U_\bb\ge V$.} Let $\tau$ be arbitrary. By \eqref{eq:reflected-ito} and
the superharmonicity \ref{adm:super}, the process
$e^{-r(t\wedge\tau)}U_\bb(X_{t\wedge\tau})-\int_0^{t\wedge\tau}e^{-rs}c\,ds$ is a
supermartingale after localization; the boundary terms vanish by
\ref{adm:neumann}. Using \ref{adm:growth} and letting the localization
parameter tend to infinity,
\[
        U_\bb(x)\ge\E_x\!\Big[e^{-r\tau}U_\bb(X_\tau)-\int_0^\tau e^{-rs}c\,ds\Big]
        \ge\E_x\!\Big[e^{-r\tau}G(X_\tau)-\int_0^\tau e^{-rs}c\,ds\Big],
\]
where the last inequality is \ref{adm:major}. Taking the supremum over $\tau$
gives $U_\bb(x)\ge V(x)$.

\emph{$U_\bb\le V$.} Take $\tau=\tau_\bb$. By
Lemma~\ref{lem:candidate-martingale},
$U_\bb(x)=\E_x[e^{-r\tau_\bb}U_\bb(X_{\tau_\bb})-\int_0^{\tau_\bb}e^{-rs}c\,ds]$.
Since $X_{\tau_\bb}\in\Dc_\bb$ and $U_\bb=G$ there by \ref{adm:contact},
\[
        U_\bb(x)=\E_x\!\Big[e^{-r\tau_\bb}G(X_{\tau_\bb})-\int_0^{\tau_\bb}e^{-rs}c\,ds\Big]\le V(x).
\]
Hence $U_\bb=V$ and $\tau_\bb$ is optimal. Finally \ref{adm:contact} gives
$\Dc_\bb=\{U_\bb=G\}=\{V=G\}$ and $\Cc_\bb=\{U_\bb>G\}=\{V>G\}$.
\end{proof}

\begin{remark}[Verification, not formal boundary solving]
\label{rem:verification-not-solving}
Theorem~\ref{thm:verification} does not assert that every solution of the trace
condition is optimal. The trace condition \ref{adm:trace} must be accompanied by
majorization, contact, strict continuation, reflected Neumann compatibility,
growth, generalized It\^{o} admissibility, and measure-superharmonicity. These
conditions are the analytical substitute for a formal smooth-fit calculation.
\end{remark}

\begin{corollary}[Uniqueness within the admissible class]
\label{cor:uniqueness}
If $\bb_1,\bb_2$ are admissible epigraph candidates, then
$U_{\bb_1}=U_{\bb_2}=V$ on $\Rp$, $\Dc_{\bb_1}=\Dc_{\bb_2}$, and
$\bb_1=\bb_2$ at every common point of continuity of the boundary.
\end{corollary}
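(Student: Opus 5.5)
The corollary follows from applying Theorem~\ref{thm:verification} to each candidate separately and then comparing the resulting sets.

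\emph{Step 1: both candidate values equal $V$.} Since $\bb_1$ and $\bb_2$ are each admissible, Theorem~\ref{thm:verification} gives $U_{\bb_1}=V$ and $U_{\bb_2}=V$ on $\Rp$. Hence $U_{\bb_1}=U_{\bb_2}=V$.

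\emph{Step 2: the stopping sets coincide.} The same theorem gives $\Dc_{\bb_j}=\{V=G\}$ for $j=1,2$, so $\Dc_{\bb_1}=\Dc_{\bb_2}=\Dc$. No comparison principle is needed here. The identification of $V$ comes from the verification argument alone, which matches the stance of Remark~\ref{rem:boundary-condition}.

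\emph{Step 3: the boundaries agree where both are continuous.} Fix $x_1$. The vertical section of $\Dc_{\bb_j}$ at $x_1$ is $\{x_2\ge 0:x_2\ge\bb_j(x_1)\}$.
\begin{itemize}
\item If $\bb_j(x_1)<\infty$, this section is $[\bb_j(x_1),\infty)$.
\item If $\bb_j(x_1)=+\infty$, the section is empty.
\end{itemize}
Two sections of this form are equal only if their left endpoints are equal. The equality $\Dc_{\bb_1}=\Dc_{\bb_2}$ therefore gives $\bb_1(x_1)=\inf\{x_2:(x_1,x_2)\in\Dc\}=\bb_2(x_1)$ for every $x_1$. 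This is exactly the formula \eqref{eq:b-def} applied to $H=V-G$. In particular $\bb_1=\bb_2$ at every common point of continuity, and the equality in fact holds everywhere.

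\emph{Main point requiring care.} The one subtlety is the convention for closed epigraphs when the boundary is only locally Lipschitz on $\{\bb<\infty\}$, or takes the value $+\infty$. If $\Dc_\bb$ were read as the closure or interior of a graph region, one would instead argue through $\operatorname{int}\Dc_{\bb_1}=\operatorname{int}\Dc_{\bb_2}$. At a common continuity point $x_1$, the interior section is $(\bb_j(x_1),\infty)$, so the two boundary values still coincide. This explains why the statement is restricted to common continuity points.
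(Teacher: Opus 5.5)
Your proof is correct and follows the same route as the paper: you apply Theorem~\ref{thm:verification} to each candidate to get $U_{\bb_j}=V$ and $\Dc_{\bb_j}=\{V=G\}$, then read off the boundary from the coinciding epigraphs. Your observation that, with $\Dc_\bb$ defined literally as the closed set $\{x_2\ge\bb(x_1)\}$, the vertical sections force $\bb_1=\bb_2$ at every $x_1$, and not only at common continuity points, is a valid and slightly sharper version of the paper's last step.
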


\begin{proof}
By Theorem~\ref{thm:verification} both candidate values equal $V$, and
$\Dc_{\bb_j}=\{U_{\bb_j}=G\}=\{V=G\}$, so the stopping sets coincide; being
epigraphs, their boundary functions agree at common continuity points.
\end{proof}

\section{Conclusion}
\label{sec:conclusion}

We have formulated a reflected optimal stopping problem with the max-type payoff
$G=x_1\vee\alpha x_2$ as a measure-valued obstacle problem. The nonsmooth
obstacle produces a diagonal singular measure $\GD$ on $\Delta$, while oblique
reflection produces boundary local-time contributions; combining these with the
absolutely continuous interior gain yields the total stopping measure $\Gtot$.
For normal reflection the boundary component $\Gbd$ vanishes, but the
measure-valued formulation remains essential because the diagonal component
$\GD$ persists.

The correct potential representation of the value in the continuation region is
the killed resolvent $R_r^{\Cc}\Gtot$, not the unrestricted reflected resolvent,
because the process is not absorbed on the stopping set. Under a vertical
monotonicity hypothesis on $V-G$ the stopping set admits an epigraph
representation, and the killed-potential trace condition
$\Tr_{\partial\Cc}R_r^{\Cc}\Gtot=0$ becomes the natural free-boundary condition.
For admissible epigraph candidates, measure-superharmonicity prevents the
diagonal from entering the stopping interior; equivalently, at finite boundary
points one has \(\bb(x_1)\ge x_1/\alpha\).
The verification theorem shows that any admissible
epigraph candidate satisfying contact, strict continuation, reflected Neumann
compatibility, growth, the trace condition, and measure-superharmonicity
coincides with the value function, and that first entry into the candidate
stopping set is optimal.

The remaining analytical questions are the derivation of monotonicity of $V-G$
from primitive assumptions on $(\beta,\sigma,c,r,\alpha)$, finer regularity of
the killed-potential trace at the free boundary, and uniqueness of admissible
epigraph boundaries under minimal hypotheses.

\bibliography{reference}

\end{document}